\numberwithin{equation}{subsection}
\theoremstyle{plain}
\newtheorem{thm}[subsection]{Theorem}
\theoremstyle{definition}
\newtheorem{example}[subsection]{Example}
\theoremstyle{definition}
\newtheorem{defn}[subsection]{Definition}
\theoremstyle{plain}
\newtheorem{lem}[subsection]{Lemma}
\theoremstyle{plain}
\newtheorem{prop}[subsection]{Proposition}
\theoremstyle{plain}
\newtheorem{cor}[subsection]{Corollary}
   \newenvironment{proof}[1][\proofname]{\par
     \normalfont\topsep6\p@\@plus6\p@\relax
     \trivlist
     \itemindent\parindent
     \item[\hskip\labelsep
           \scshape
       #1]\ignorespaces
   }{%
     \endtrivlist\@endpefalse
   }
   \providecommand{\proofname}{Proof}
\title[Subfactors and Random Matrices]{Free probability, Planar algebras, Subfactors and Random Matrices.} 
\author[D. Shlyakhtenko]{Dimitri Shlyakhtenko\thanks{Research supported by NSF grants DMS0555680 and DMS0900776}} 
\begin{document}

\begin{abstract}
To a planar algebra $\mathcal{P}$ in the sense of Jones we associate
a natural non-commutative ring, which can be viewed as the ring of
non-commutative polynomials in several indeterminates, invariant under
a symmetry encoded by $\mathcal{P}$. We show that this ring carries
a natural structure of a non-commutative probability space. Non-commutative
laws on this space turn out to describe random matrix ensembles possessing
special symmetries. As application, we give a canonical construction
of a subfactor and its symmetric enveloping algebra associated to
a given planar algebra $\mathcal{P}$. This talk is based on joint
work with A. Guionnet and V. Jones.
\end{abstract}
\begin{classification} Primary: 46L37, 46L54; Secondary 15A52. \end{classification} 

\begin{keywords}
Free probability, von Neumann algebra, random matrix, subfactor, planar algebra. \end{keywords}

\maketitle

\section{Introduction.}

The aim of this paper is to explore the appearance of planar algebra
structure in three areas of mathematics: subfactor theory; free probability
theory; and random matrices. 

Jones' subfactor theory has lead to a revolution in understanding
what may be termed {}``quantum symmetry''. The standard invariant
of a subfactor --- the so-called lattice of higher relative commutants,
or {}``$\lambda$-lattice'' \cite{popa:standardlattice,gdhj} is
a remarkable mathematical object, which can represent a very general
type of symmetry. For example, a subfactor inclusion (and so its standard
invariant) can be associated to a Lie group representation. In this
case, the vector spaces that make up the standard invariant are the
spaces of intertwiners between tensor powers of that representation.
Thus the standard invariant of such a subfactor can be used to encode
the representation theory of a Lie group, and thus symmetries associated
with Lie group actions.

In his groundbreaking paper \cite{jones:planar,jones:graphPlanar}
Jones (building on an earlier algebraic axiomatization of standard
invariants by Popa \cite{popa:standardlattice}) showed that there
is a striking way to characterize standard invariants of subfactors:
these are exactly \emph{planar algebras }(see \S\ref{sub:Planar-algebras}
below for a definition). Very roughly, one can think of a planar algebra
as a sequence of vector spaces consisting of vectors invariant under
some {}``quantum symmetry'', together with very general ways (dictated
by planar diagrams) of producing new invariant vectors from existing
vectors. The planar algebra thus \emph{encodes} the underlying symmetry.
In the context of the present paper, we shall use the terms {}``quantum
symmetry'' and {}``planar algebra'' interchangeably. 

Curiously, planar diagrams also occur in random matrix theory. Certain
random multi-matrix ensembles (see \ref{sub:Free-analogs-ofGibbs}
below) are asymptotically described by combinatorics involving counting
\emph{planar maps} (these objects are very much like planar diagrams
appearing in the definition of planar algebras). This fact has been
discovered and extensively used by physicists, starting from the works
of 't Hooft, Brezin, Iszykson, Parisi, Zuber and others (see e.g.
\cite{thooft:planardiagram,brezin-Izykson-Parisi-Zuber:Planar}).
A rigorous proof of convergence was obtained by Guionnet and Maurel
Segala (see \cite{guionnet:icm,guionnet-maurelSegala:ALEA} and references
therein) and Ercolani and McLaughlin \cite{erclani-mclaughlin:enumerationMaps}. 

Finally, turning to Voiculescu's free probability theory \cite{DVV:book},
it was shown by Speicher \cite{cumulants} and others that many important
free probability laws (such as the semicircle law, the free Poisson
law and so on) have combinatorial descriptions involving counting
planar objects (such as non-crossing partitions, which are also very
closely related to planar diagrams).

Thus one is faced with two natural questions. First, why do these
planar structures appear in these three areas? And second, how can
these similarities be exploited?

Concerning the first question, we do not know a fully satisfactory
answer. However, if one grants that planar structure is necessary
to describe {}``quantum symmetries'' (i.e., subfactors), then one
is able to find explanations for appearances of planar structure in
free probability theory and random matrices. We show that one has
a natural notion of a \emph{non-commutative probability law having
a quantum symmetry} --- this law is given by a trace on a ring naturally
associated to a planar algebra. Mathematically, this is accomplished
by a {}``change of rings'' procedure, where we replace the ring
of non-commutative polynomials in $K$ variables with a certain canonical
ring associated to a given planar algebra (see \S\ref{sub:planarAlgebraProbSpace}).
This {}``change of rings'' is analogous to the passage from some
probability space $\Omega$ to the quotient space $\Omega/G$ in the
case that the laws of some family of random variables are invariant
under the action of a group $G$.

Also, we show how to construct random matrix ensembles, which asymptotically
give rise to a non-commutative law with a given quantum symmetry. 

This means that any time one considers a natural equation in free
probability theory, or a natural equation giving the asymptotics of
a random matrix ensemble, this equation must make sense not only as
an equation involving polynomials in $K$ non-commuting indeterminates,
but also arbitrary planar algebra elements. Thus the equation (and
so its solutions) must have a natural planar structure.

Concerning the second question, we give a number of applications of
our techniques. One such application is a version of the ground-breaking
theorem of Popa \cite{popa:standardlattice,shlyakht-popa:universal}
which states that every planar algebra $\mathcal{P}$ arises from
a subfactor $N\subset M$ with $N,M$ isomorphic to free group factors.
It turns out that both $N$ and $M$ can in fact be chosen to be natural
non-commutative probability spaces {}``in the presence of the symmetry
$\mathcal{P}$''. On the random matrix side, our approach gives a
mathematical framework to formulate the work of a number of physics
authors \cite{eynard-zinnJustin:OnModel,kostov:OnModel,zinn-justin:quantumOnModel}
on the so-called $O(n)$ matrix model. In fact, using our techniques
one can make rigorous sense of the $O(n)$ matrix model for $n\in\{2\cos\frac{\pi}{n}:n\geq3\}\cup[2,+\infty)$
(non-integer values of $n$ are used in the physics literature).

The remainder of the paper is organized as follows. We first discuss
some basic notions from free probability theory and subfactors. Next,
we discuss a notion of a non-commutative probability law having a
symmetry encoded by a planar algebra $\mathcal{P}$ and present some
applications to subfactor theory. Finally, we show that one can construct
random matrix ensembles that model certain non-commutative laws with
a given planar algebra symmetry $\mathcal{P}$, and explain connections
with a class of random matrix ensembles used in the physics literature,
and derive some random matrix consequences. 

This paper is based on the joint work with A. Guionnet and V.F.R.
Jones \cite{guionnet-jones-shlyakhtenko1,guionnet-jones-shlyakhtenko2}.

\section{Background and basic notions: Free probability and non-commutative
probability spaces.}

\subsection{Non-commutative probability spaces}

Recall (see for example \cite{DVV:book}) that an algebraic non-commutative
probability space $(A,1_{A},\tau)$ consists of an algebra $A$ with
unit $1_{A}$ and a unital linear functional $\tau:A\to\mathbb{C}$.
We often make the assumption that $A$ is a $*$-algebra and $\tau$
is a \emph{trace}, i.e., $\tau(ab)=\tau(ba)$ for all $a,b\in A$.
Elements of $A$ are called \emph{non-commutative random variables}.
Here are a few examples:
\begin{example}
(a) If $(\mathfrak{X},\mu)$ is a measure space and $\mu$ is a probability
measure, then $(A=L^{\infty}(\mathfrak{X},\mu),1_{A},f\stackrel{\tau}{\to}\int fd\mu)$
is a non-commutative probability space. \\
(b) For any $N$, the algebra of $N\times N$ matrices $(A=M_{N\times N}(\mathbb{C}),1_{A}=\textrm{Id},\tau=\frac{1}{N}Tr)$
is a non-commutative probability space.\\
(c) Consider $A=M_{N\times N}(L^{\infty,-}(\mathfrak{X},\mu))$,
with $(\mathfrak{X},\mu)$ as in (a). Thus elements of $A$ are \emph{random
matrices}. Then $(A,1_{A},\mathbb{E}(\frac{1}{N}Tr(\cdot)))$ is a
non-commutative probability space.
\end{example}
Note that in all of these examples, $\tau$ is a trace: $\tau(xy)=\tau(yx)$. 

In order to be able to do analysis on non-commutative probability
spaces we make the assumption that the algebra $(A,1_{A},\tau)$ is
represented (by bounded or unbounded operators) on a Hilbert space
$H$ by a faithful unital representation $\pi$, so that $\tau(a)=\langle\Omega,\pi(a)\Omega\rangle$
for some fixed vector $\Omega\in H$.

Elements of non-commutative probability spaces are called non-commutative
random variables.

\subsection{Non-commutative laws}

Given $K=1,2,\dots$ classical real random variables $X_{1},\dots,X_{K}$,
which we can think of as an $\mathbb{R}^{K}$-valued function $X$
on some probability space $(\mathfrak{X},\mu)$, their joint law is
defined to be the push-forward by $\tau=X_{*}\mu$ of $\mu$ to a
probability measure on $\mathbb{R}^{K}$. If $\mu$ has finite moments,
we obtain a linear functional on the algebra of polynomials on $\mathbb{R}^{K}$.

By analogy, given non-commutative random variables $X_{1},\dots,X_{K}\in A$,
their \emph{non-commutative law $\tau_{X_{1},\dots,X_{K}}$} is the
linear function on the algebra of all non-commutative polynomials
in $K$ indeterminates $\mathbb{C}[t_{1},\dots,t_{K}]$ obtained by
composing $\tau$ with the canonical map sending $t_{j}$ to $X_{j}$.
In other words\[
\tau_{X_{1},\dots,X_{K}}(P(t_{1},\dots,t_{K}))=\tau(P(X_{1},\dots,X_{K}))\]
for any non-commutative polynomial $P$. 

If $K=1$, non-commutative laws are the same as commutative laws,
modulo identification of measures with linear functionals they induce
on polynomials by integration. For example, in the case of a single
self-adjoint matrix $Y\in(M_{N\times N},\frac{1}{N}Tr)$, its non-commutative
law corresponds to integration against the measure $\mu_{Y}=\frac{1}{N}\sum\delta_{\lambda_{j}}$,
where $\lambda_{1},\dots,\lambda_{N}$ are the eigenvalues of $Y$.
If $Y$ is a random matrix, its non-commutative law captures the expected
value of the random spectral measures associated to $Y$, $\mathbb{E}(\mu_{Y})$.

The classical notion of independence of random variables can be reformulated
algebraically by stating that $(X_{1},\dots,X_{K})$ is independent
from $(X_{K+1},\dots,X_{K+L})$ in a non-commutative probability
space $(A,\tau)$ if the law of $(X_{1},\dots,X_{K+L})\in(A,\tau)$
is the same as that of the variables\[
(\alpha_{1}(X_{1}),\dots,\alpha_{1}(X_{K}),\alpha_{2}(X_{K+1}),\dots,\alpha_{2}(X_{K+L}))\in(A\otimes A,\tau\otimes\tau).\]
Here $\alpha_{1}(X)=X\otimes1$, $\alpha_{2}(X)=1\otimes X$ are two
natural embeddings of $A$ into $A\otimes A$. 

Voiculescu developed his \emph{free probability theory} (see e.g.
\cite{DVV:book}) around another notion of independence, free independence.
For this notion, we say that $(X_{1},\dots,X_{K})$ is freely independent
from $(X_{K+1},\dots,X_{K+L})$ in a non-commutative probability
space $(A,\tau)$ if the law of $(X_{1},\dots,X_{K+L})\in(A,\tau)$
is the same as that of the variables\[
(\alpha_{1}(X_{1}),\dots,\alpha_{1}(X_{K}),\alpha_{2}(X_{K+1}),\dots,\alpha_{2}(X_{K+L}))\in(A*A,\tau*\tau),\]
where $*$ denotes the free product \cite{DVV:free,DVV:book}, and
$\alpha_{1}$, $\alpha_{2}$ are the natural embeddings of $A$ into
$A*A$ (into the first and second copy, respectively).

If $\tau$ is a non-commutative law satisfying positivity and boundedness
requirements, the GNS construction yields a representation of $\mathbb{C}[t_{1},\dots,t_{K}]$
on $L^{2}(\tau)$ and thus generates a von Neumann algebra $W^{*}(\tau)$.
The non-commutative case here differs significantly from the commutative
case. In the commutative case, $W^{*}(\tau)=L^{\infty}(\mathfrak{X})$,
and, notably, all measure spaces $\mathfrak{X}$ are isomorphic (at
least for laws $\tau$ which are non-atomic). In the non-commutative
case, the von Neumann algebras $W^{*}(\tau)$ are much more diverse,
and it is in general a very difficult and challenging question to
decide, for two laws $\tau,\tau'$, when $W^{*}(\tau)\cong W^{*}(\tau')$,
or to somehow identify the isomorphism class of $W^{*}(\tau)$.

\section{Symmetries: Subfactors, Planar algebras, and non-comm\-utative laws}

\subsection{Non-commutative laws with quantum symmetry}

Consider a complex-valued classical random variable $Z$; thus we
actually have a pair of random variables $Z,\bar{Z}$, whose joint
law is described by a probability measure $\mu$ on $\mathbb{C}=\mathbb{R}^{2}$:
for any function of two variables $f(x,y)$, we are interested in
the value\[
\iint f(z,\bar{z})d\mu(z,\bar{z}).\]
In this way, the law of $(Z,\bar{Z})$ is a functional on the space
of functions on $(-\infty,\infty)\times(-\infty,\infty)$. 

Assume that we know that the law of $(Z,\bar{Z})$ is invariant under
rotations: $(Z,\bar{Z})\sim(wZ,\bar{w}\bar{Z})$ for any $w\in\mathbb{C}$,
$|w|=1$. Then the joint law of $(Z,\bar{Z})$ is completely determined
by its {}``radial part'', the integrals of the form\[
\int g(|z|)d\mu(z,\bar{z}),\]
and thus defines a linear functional on the space of rotation-invariant
functions, i.e., effectively on the space of functions on $[0,+\infty)=\mathbb{C}^{2}/\mbox{rotation}$. 

Thus the \emph{presence of a symmetry dictates that we use a different
probability space}. Our aim is to extend this observation to the non-commutative
setting, allowing the most general notions of symmetry possible.

We defined a non-commutative probability law to be a linear functional
$\tau$ defined on the algebra $A=\mathbb{C}[X_{1},\dots,X_{K}]$
of non-commutative polynomials in $K$ variables. If symmetries are
present, this choice of the algebra $A$ may not be suitable. In this
case the algebra $A$ (the non-commutative analog of the ring of polynomials
on $\mathbb{R}^{K}$) must be replaced by the analog of the ring of
functions on a different algebraic variety. For instance, one may
be interested in $*$-probability spaces, i.e., we want to have an
algebra $A$ that has a non-trivial adjoint operation (involution).
This can be accomplished by considering the algebra $B=\mathbb{C}[X_{1},\dots,X_{K},X_{1}^{*},\dots,X_{K}^{*}]$
and defining $X_{j}^{*}$ to be the adjoint of $X_{j}$. An even more
interesting situation is the case that our algebra $B$ has a natural
symmetry. For example, we may consider the action of the unitary group
$U(K)$ on $B$ given on the generators by\begin{equation}
U\cdot X_{k}=\sum U_{ik}X_{i},\qquad U\cdot X_{k}^{*}=\sum\overline{U_{ik}}X_{i}^{*},\qquad\quad U=(U_{ij}).\label{eq:UkAction}\end{equation}
In this case we may only be interested in a part of $B$, the algebra
$B^{U(K)}$ consisting of $U(K)$-invariant elements. One can easily
see that $B^{U(K)}$ is not even a finitely-generated algebra, but
it is the natural non-commutative probability space on which to define
$U(K)$-invariant laws.

More generally, in this paper we will be interested in non-commutative
laws defined on a class of {}``symmetry algebras'', which are the
analogs of algebras such as $B^{U(K)}$ above for more general symmetries
(including actions of quantum groups). 

As is well-known, subfactor theory of Jones provides a framework for
considering such very general symmetries. To formalize our notion
of a {}``non-commutative probability law with a quantum symmetry'',
we shall first review Jones' notion of planar algebras \cite{jones:planar,jones:graphPlanar}.

\subsection{The standard invariant of a subfactor: spaces of intertwiners}

Planar algebras \cite{jones:planar,jones:graphPlanar} were introduced
by Jones in his study of invariants of subfactors of II$_{1}$ factors.

Let $M_{0}\subset M_{1}$ be an inclusion of II$_{1}$ factors of
finite Jones' index \cite{jones:index,gdhj}. Then $M_{1}$ can be
regarded as a bimodule over $M_{0}$ by using the left and right multiplication
action of $M_{0}$ on $M_{1}$. Using the operation of the relative
tensor product of bimodules (see e.g. \cite{connes:correspondences,popa:correspondences,connes:ncgeom,bisch:bimodules})
one can construct other $M_{0},M_{0}$-bimodules by considering tensor
powers\[
M_{k}=\underbrace{M_{1}\otimes_{M_{0}}\otimes\cdots\otimes_{M_{0}}M_{1}}_{k}.\]
One can then consider the intertwiner spaces\[
A_{ij}=\operatorname{Hom}_{M_{0},M_{0}}(M_{i},M_{j})\]
consisting of all homomorphisms from $M_{i}$ to $M_{j}$, which are
linear for both the left and the right action of $M_{0}$. Because
the index of $M_{0}\subset M_{1}$ is finite, these spaces turn out
to be finite-dimensional. The system of intertwiner spaces $A_{ij}$
has more structure than the algebra structure of the individual $A_{ij}$'s.
For example, having an intertwiner $T:M_{i}\to M_{j}$ one can also
construct an {}``induced representation'' intertwiner $T\otimes1:M_{i+1}\to M_{j+1}$.
More generally, one can restrict intertwiners, take their tensor products,
etc., thus providing many operations involving elements of the various
$A_{ij}$'s.

The following example explains how classical representation theory
of a Lie group can be viewed in subfactor terms. Similar examples
exist also in the case of quantum group representations:
\begin{example}
Let $G$ be a Lie group and $V$ be an irreducible representation
of $G$, and denote by $V^{op}$ the representation on the dual of
$V$. Let $M$ be a II$_{1}$ factor carrying an action of $G$ satisfying
a technical condition of being properly outer (such an action always
exists with $M$ a hyperfinite II$_{1}$ factor or a free group factor).
Consider the {}``Wassermann-type'' inclusion\[
M_{0}=M^{G}\subset(M\otimes End(V))^{G}=M_{1}.\]
Here $N^{G}$ denotes the fixed points algebra for an action of $G$
on $N$, and $G$ acts on $End(V)$ by conjugation. Then\begin{alignat*}{1}
\operatorname{Hom}_{M_{0},M_{0}}(M_{k}) & =\operatorname{Hom}_{G}(\underbrace{V\otimes V^{op}\otimes\cdots\otimes V\otimes V^{op}}_{k})\end{alignat*}
is the space of all $G$-invariant linear maps on $(V\otimes V^{op})^{\otimes k}$.
\end{example}
The main theorem of Jones \cite{jones:planar,jones:graphPlanar} is
that there is a beautiful abstract characterization of systems of
intertwiner spaces associated to a subfactor (also called {}``standard
invariants'', {}``$\lambda$-lattices'', systems of higher-relative
commutants): such systems are exactly the \emph{planar algebras}.
His proof relied on an earlier axiomatization of $\lambda$-lattices
by Popa \cite{popa:standardlattice}.

\subsection{Planar algebras\label{sub:Planar-algebras}}

To state the definition of a planar algebra, let us introduce the
notion of a \emph{planar tangle $T$ }with $r$ input disks or sizes
$k_{1},\dots,k_{r}$ and output disk of size $k$ (we'll write $\mathcal{T}(k_{1},\dots,k_{r};k)$
for the set of such tangles). Such a tangle is given by drawing (up
to isotopy on the plane) $r$ {}``input'' disks $(D_{j}:j=1,\dots,r)$
inside the {}``output'' disk $D$. Each disk $D_{l}$ has $2k_{l}$
points marked on its boundary (one of which is marked as the {}``first''
point). The output disk $D$ has $2k$ points marked on its boundary,
one of which is marked {}``first''. Furthermore, all marked boundary
points are connected to other marked points by non-crossing paths.%
\footnote{One also assumes that the connected components of $D\setminus\bigcup_{j}D_{j}$
are colored by two colors, so that adjacent regions are colored by
different colors. We shall, however, ignore this part of this structure
in this paper.%
}

\begin{figure}
\begin{centering}
\includegraphics[bb=0bp 0bp 230mm 6.2999999999999998cm,clip,scale=0.543]{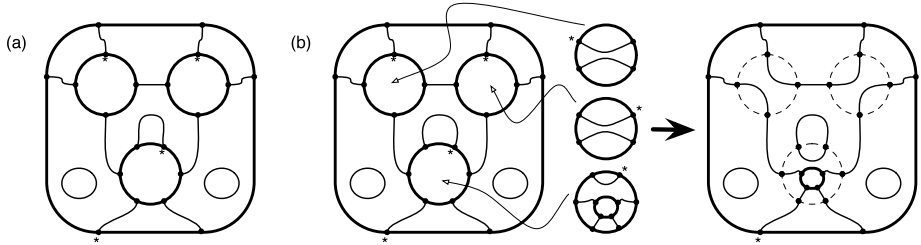}
\par\end{centering}

\caption{\label{fig:Planar-tangles}Planar tangles; composing planar tangles.}

\end{figure}
Figure \ref{fig:Planar-tangles}(a) shows an example of a planar tangle
in $\mathcal{T}(3,3,2;3)$; the first point on each interior disk
is labeled by a $*$. Note that tangles may contain loops which are
not connected to any interior disks.

Tangles can be composed by gluing the output disk of one tangle inside
an input disk of another tangle in a way that aligns points marked
{}``first'' and preserves the orientation of boundaries (see Figure
\ref{fig:Planar-tangles}(b), which illustrates the composition of
a tangle in $\mathcal{T}(3,3,2;3)$ with three tangles, from $\mathcal{T}(4;6)$,
$\mathcal{T}(;4)$ and $\mathcal{T}(;6)$). (This is only possible
if disks are of matching sizes). 
\begin{defn}
Let $(P_{k}:k=0,1,2,\dots)$ be a collection of vector spaces. We
say that $(P_{k})_{k\geq0}$ forms a planar algebra if any planar
tangle\emph{ }$T\in\mathcal{T}(k_{1},\dots,k_{r};k)$ gives rise to
a multi-linear operation $Op(T):P_{k_{1}}\otimes\cdots\otimes P_{k_{r}}\to P_{k}$
in such a way that the assignment $T\to Op(T)$ is natural with respect
to composition of tangles and of multilinear maps. 

Very roughly, one should think of the spaces $P_{k}$ as the space
of {}``intertwiners'' of degree $2k$ for some quantum symmetry
(see \S\ref{sub:Planar-algebras-of-polynomials} below). The various
operations $Op(T)$ correspond to the various ways of combining such
intertwiners to form new intertwiners.

We also often make the assumption that the space $P_{0}$ is one-dimensional
and all $P_{k}$ are finite-dimensional. In particular, a tangle $T$
with no input disks and one output disk with zero marked points and
no paths inside gives rise to a basis element of $P_{0}$, which we'll
denote by $\emptyset$. If we instead consider a tangle $T'$ with
no input disks, one output disk with no marked points, and a simple
closed loop inside of the output disk, then $T'$ produces an element
$\delta\emptyset$ in $P_{0}$ (where $\delta$ is some fixed number).
Furthermore, it follows from naturality of composition of tangles
that if some tangle $T$ is obtained from a tangle $T'$ by removing
a closed loop, then $Op(T)=\delta Op(T')$. 

The tangle in Figure \ref{fig:Canonical-bilinear-form-and-TL}(a)
gives rise to a bilinear form on each $A_{k}$, which we assume to
be non-negative definite. We endow each $P_{k}$ with an involution
compatible with the action of orientation-preserving planar maps on
tangles. Finally, we assume a spherical symmetry, so that we consider
tangles up to isotopy on the sphere (and not just the plane).

A planar algebra satisfying these additional requirements is called
a \emph{subfactor planar algebra} with parameter $\delta$. It is
a famous result of Jones \cite{jones:index} that $\delta\in\{2\cos\frac{\pi}{n}:n\geq3\}\cup[2,+\infty)$,
and all of these values can occur.
\end{defn}

\subsection{Examples of planar algebras}

Planar algebras can be thought of as families of linear spaces consisting
of vectors {}``obeying a symmetry'', where the word symmetry is
taken in a very generalized sense (such {}``symmetries'' include
group actions as well as quantum group actions). We consider a few
examples:

\subsubsection{Planar algebras of polynomials\label{sub:Planar-algebras-of-polynomials}}

Let $X_{1},\dots,X_{K},X_{1}^{*},\dots,X_{K}^{*}$ be indeterminates,
and denote by $A$ the algebra spanned by alternating monomials of
the form $X_{i_{1}}X_{j_{1}}^{*}\cdots X_{i_{k}}X_{j_{k}}^{*}$. Let
$P_{k}$ be the linear subspace of $A$ consisting of all elements
that have degree $2k$. We claim that $\mathcal{P}=(P_{k})_{k\geq0}$
is a planar algebra if endowed with the following structure. Given
a monomial $W=X_{i_{1}}X_{j_{1}}^{*}\cdots X_{i_{k}}X_{j_{k}}^{*}\in P_{k}$,
associate to it the labeled disk $D(W)$ whose $2k$ boundary points
are labeled (clockwise, from the {}``first'' point) by the $2k$-tuple
$(i_{1},j_{1},i_{2},j_{2},\dots,i_{k},j_{k})$. Now given a planar
tangle $T\in\mathcal{T}(k_{1},\dots,k_{r};k)$ and monomials $W_{1},\dots,W_{r}$
of appropriate degrees, we define\[
Op(T)(W_{1},\dots,W_{r})=\sum_{W}C_{W}W.\]
Here the sum is over all monomials $W\in A_{k}$ and $C_{W}$ are
integers obtained as follows. Glue the disks $D(W_{j})$ into the
input disks of $T$ and then the output disk of $T$ into $D(W)$.
We obtain a collection of disks, whose marked boundary points are
connected by curves. Then $C_{W}$ is the total number of ways to
assign integers from $\{1,\dots,K\}$ to these curves, so that each
curve has the same label as its endpoints. ($C_{W}=0$ if no such
assignment exists).

In this case, $\mathcal{P}$ is actually a subfactor planar algebra
with parameter $\delta=K$ (the number of ways to assign an integer
from $\{1,\dots,K\}$ to a closed loop). The corresponding subfactor
inclusion is rather trivial: it corresponds to the $K\times K$ matrix
inclusion $M_{0}=M\subset M\otimes M_{K\times K}(\mathbb{C})=M_{1}$,
for any II$_{1}$ factor $M$. 

Consider the action of the unitary group $U(K)$ on each $P_{k}$
defined by \eqref{eq:UkAction}. In other words, we identify $P_{k}$
with the $k$-th tensor power of $\mathbb{C}^{K}\otimes\overline{\mathbb{C}^{K}}=\operatorname{End}(\mathbb{C}^{K})$,
where $\mathbb{C}^{K}$ is the basic representation of $U(K)$. Then
the linear subspaces $P_{k}^{U(K)}$ consisting of vectors fixed by
the $U(K)$ action turn out to form a planar algebra \emph{$\mathcal{P}^{U(K)}$}
(taken with the restriction of the planar algebra structure of $\mathcal{P}$).
The associated subfactor has the form\[
M^{U(K)}\subset(M\otimes\operatorname{End}(\mathbb{C}^{K}))^{U(K)}.\]

\subsubsection{The Temperley-Lieb planar algebra\label{sub:The-Temperley-Lieb-}}

\begin{figure}

\begin{centering}
\includegraphics[bb=0bp 0bp 350bp 85bp,clip,scale=0.55]{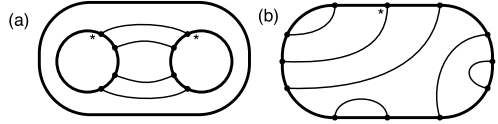}\caption{\label{fig:Canonical-bilinear-form-and-TL}Canonical bilinear form;
Temperley Lieb diagrams.}

\par\end{centering}

\end{figure}

Let $TL_{k}$ be the linear space spanned by tangles $T\in\mathcal{T}(;k)$
with \emph{no} internal disks and $2k$ points on the outer disk.
Such tangles are called \emph{Temperley-Lieb diagrams} (see Figure
\ref{fig:Canonical-bilinear-form-and-TL}(b)). Then $TL=(TL_{k})_{k\geq0}$
is a planar algebra in the following natural way. Given any tangle
$T\in\mathcal{T}(k_{1},\dots,k_{r};k)$ and Temperley-Lieb diagrams
$T_{1},\dots,T_{r}$ , $Op(T)(T_{1},\dots,T_{r})$ is defined to be
the result of gluing the diagrams $T_{1},\dots,T_{r}$ into the input
disks of $T$, provided that we agree that closed loops contribute
a multiplicative factor of $\delta$. $TL$ is actually a subfactor
planar algebra when $\delta$ is in the set of allowed index values
$\{2\cos\frac{\pi}{n}:n\geq3\}\cup[2,+\infty)$.

It should be noted that \emph{any} planar algebra $\mathcal{P}$ contains
a homomorphic image of $TL$; indeed, $TL$ elements arise as $Op(T)$
when $T\in\mathcal{T}(;k)$.

\subsection{Algebras and non-commutative probability spaces arising from planar
algebras}

A planar algebra $\mathcal{P}=(P_{k})_{k\geq0}$ has, by definition,
a large variety of mutli-linear operations. We shall single out the
following bilinear operations $\wedge_{k}$, each of which is an associative
multiplication on $\oplus_{n\geq k}P_{k}$. The operation $\wedge_{k}$
takes $P_{k+n}\times P_{k+m}\to P_{k+m+n}$ and is given by the following
tangle (here $k=2$, $n=1$ and $m=2$):

\begin{center}
\includegraphics[bb=0bp 0bp 250bp 75bp,clip,scale=0.6]{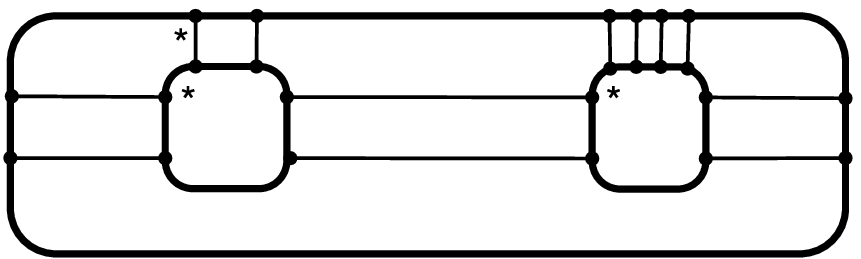}
\par\end{center}

\subsubsection{The product $\wedge_{0}$}

Perhaps the easiest way to see the importance of these operations
is to realize that in the case of planar algebra of polynomials (see
\S\ref{sub:Planar-algebras-of-polynomials}) the multiplication $\wedge_{0}$
is just the ordinary multiplication of polynomials. 

Thus if we think of $\oplus_{k\geq0}P_{k}$ as a linear space consisting
of vectors which are invariant under some {}``quantum symmetry'',
the product $\wedge_{0}$ is a kind of tensor product of these invariants,
and thus $(\mathcal{P},\wedge_{0})$ has the natural interpretation
of the algebra of {}``invariant polynomials''.

\subsubsection{The higher products $\wedge_{k}$}

In the case of the polynomial algebra (\S\ref{sub:Planar-algebras-of-polynomials}),
the product $\wedge_{k}$ corresponds to the product on the algebra
of differential operators of degree $k$. Let us consider such operators
of the form (for simplicity, if $k$ is even)\[
X_{i_{1}}X_{j_{1}}^{*}\cdots X_{i_{k/2}}X_{j_{k/2}}^{*}X_{t_{1}}X_{s_{1}}^{*}\cdots X_{t_{n}}X_{s_{n}}^{*}\partial_{X_{i_{k/2+1}}}\partial_{X_{j_{k/2+1}}^{*}}\cdots\partial_{X_{i_{k}}}\partial_{X_{j_{k}}^{*}}\in P_{k+n}.\]
Such expressions can be multiplied using the convention that $\partial_{X_{s}^{a}}X_{t}^{b}=\delta_{a\neq b}\delta_{s=t}1$,
where $a,b\in\{\ ,*\}$. This is exactly the multiplication $\wedge_{k}$.

Note that the map $\mathcal{E}_{k}$ given by the tangle in Figure
\eqref{fig:VoicTraceAndCup}(c) defines a natural map from $(\mathcal{P},\wedge_{k})$
to $(\mathcal{P},\wedge_{0})$.
\begin{defn}
A \emph{planar algebra law} associated to a planar algebra $\mathcal{P}$
is a linear functional $\tau$ on the algebra \emph{$(\mathcal{P},\wedge_{0})$,
so that $\tau\circ\mathcal{E}_{k}$ is a trace on $(\mathcal{P},\wedge_{0})$
for any $k\geq0$.}
\end{defn}
Since $P_{k}$ can be thought of as the space of vectors with a {}``quantum
symmetry encoded by \emph{$\mathcal{P}$}'', a planar algebra law
is a law having this {}``quantum symmetry''.

\subsection{The Voiculescu trace on $(\mathcal{P},\wedge_{0})$\label{sub:planarAlgebraProbSpace}}

Any planar algebra probability space comes with a natural trace $\tau=\tau_{TL}$
given by the tangle in Figure \eqref{fig:VoicTraceAndCup}(a).

\begin{figure}
\begin{centering}
\includegraphics[bb=0bp 0bp 520bp 160bp,clip,scale=0.55]{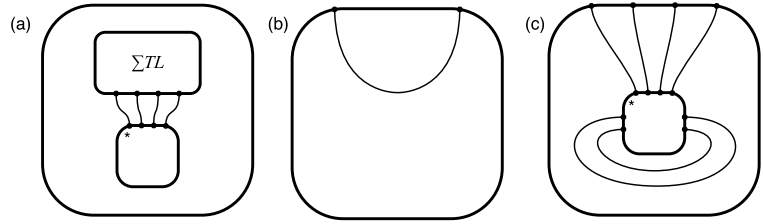}
\par\end{centering}

\caption{\label{fig:VoicTraceAndCup}(a) The Voiculescu trace; here $\sum TL$
stands for the sum of all $TL$ elements with the appropriate number
of strings. (b) The element $\cup$. (c) The map $\mathcal{E}_{k}$
(here $k=2$).}

\end{figure}

\begin{lem}
\cite{guionnet-jones-shlyakhtenko1} \label{lem:cu}(Non-commutative
analog of the $\chi$-squared distribution). Consider the element
$\cup\in TL$ described in Figure \eqref{fig:VoicTraceAndCup}(b).
Then law of $\cup\in TL\subset(\mathcal{P},\wedge_{0},\tau_{TL})$
is the free Poisson law of parameter $\delta$ (see Figure \ref{fig:Free-Poisson-law}).
\end{lem}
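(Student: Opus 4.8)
The plan is to compute the moments $\tau_{TL}(\cup^{\wedge_0 n})$ for all $n\geq 1$ and recognize them as the moments of the free Poisson (Marchenko--Pastur) law of parameter $\delta$, namely $\sum_{\pi\in NC(n)}\delta^{|\pi|}$, where the sum runs over non-crossing partitions of $\{1,\dots,n\}$ and $|\pi|$ is the number of blocks. First I would unwind the definitions: $\cup\in TL_1\subset P_1$ is a single cup diagram, and $\cup^{\wedge_0 n}$ is, by the definition of $\wedge_0$ as the concatenation tangle, a specific Temperley--Lieb diagram in $TL_n\subset P_n$ (a "staircase" of $n$ nested cup-cap pairs). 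Then $\tau_{TL}(\cup^{\wedge_0 n})$ is obtained by closing up this $TL_n$ diagram with the Voiculescu-trace tangle of Figure 4(a), which inserts $\sum TL$ — a sum over all Temperley--Lieb diagrams with the matching number of strings — weighted by the usual $\delta^{\#\text{loops}}$ rule coming from the closed-loop relation $Op(T)=\delta\, Op(T')$ stated in the definition of a subfactor planar algebra.

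The heart of the argument is the combinatorial identification of the resulting sum. Each term in $\sum TL$ is itself a planar pairing of the remaining boundary points; gluing it against the staircase of cups produces a closed planar diagram whose value is $\delta$ to the number of closed loops. I would set up a bijection between the terms of $\sum TL$ that contribute and non-crossing partitions $\pi\in NC(n)$: the planarity of Temperley--Lieb diagrams forces the connectivity pattern of the glued cups to be non-crossing, each block of $\pi$ corresponds to a cyclically-grouped family of cups, and the number of closed loops formed equals $|\pi|$ (one loop per block). Summing over all such diagrams gives exactly $\sum_{\pi\in NC(n)}\delta^{|\pi|}$, which is the $n$-th moment of the free Poisson law of parameter $\delta$ by Speicher's combinatorial description of free cumulants (the free Poisson law is characterized by all free cumulants equal to $\delta$). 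Since a compactly supported probability law on $\mathbb{R}$ is determined by its moments, and $\tau_{TL}$ is a trace by the definition of a planar algebra law together with the spherical symmetry assumption, this identifies the law of $\cup$ as claimed.

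The main obstacle I expect is the careful bookkeeping in the loop-counting step: one must verify that gluing the staircase diagram for $\cup^{\wedge_0 n}$ against a general $TL$ diagram in the trace tangle produces precisely the non-crossing-partition weights, with no overcounting and no spurious loops, and that the "first point" markings and orientations in the tangle composition align correctly so that the combinatorics is genuinely that of $NC(n)$ rather than some other planar structure. A clean way to handle this is to use the known identification of $(\mathcal{P},\wedge_0)$ restricted to $TL$ with the algebra generated by the cup element and to recognize $\cup$ as $\ell^*\ell$ (or $a^*a$) for a suitable creation-type operator whose "left" distribution under $\tau_{TL}$ is already the free analog of a $\chi^2$; then the free Poisson statement follows from the standard fact that $a^*a$ has a free Poisson distribution when $a$ is a free creation operator with the appropriate covariance $\delta$. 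Either route reduces the claim to the well-documented moment formula for the free Poisson law; the rest is the planar-algebraic translation, which I would carry out diagrammatically.
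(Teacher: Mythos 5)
Your proposal is correct and is essentially the argument of the cited reference (the paper itself states the lemma without proof): one computes $\tau_{TL}(\cup^{\wedge_0 n})$ by closing the concatenation of $n$ adjacent cups against each Temperley--Lieb diagram in the trace tangle, uses the standard bijection $TL_n\leftrightarrow NC(n)$ under which the number of closed loops equals the number of blocks, and recognizes $\sum_{\pi\in NC(n)}\delta^{|\pi|}$ as the $n$-th moment of the free Poisson law of rate $\delta$ (all free cumulants equal to $\delta$). The only slip is cosmetic: $\cup^{\wedge_0 n}$ consists of $n$ side-by-side cups, not nested ones, but this does not affect the loop count or the rest of the argument.
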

\begin{figure}
\centering{}\includegraphics[bb=0bp 150bp 800bp 450bp,clip,width=0.4\columnwidth,height=2cm]{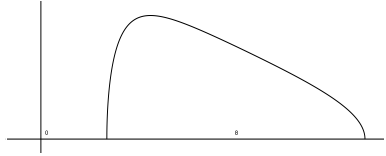}\caption{\label{fig:Free-Poisson-law}Free Poisson law ($\delta=8$).}

\end{figure}

The polynomial planar algebra (see \S\ref{sub:Planar-algebras-of-polynomials})
contains $TL$; one can compute that $\cup=\sum_{i=1}^{K}X_{i}X_{i}^{*}$,
which explains the analogy with the $\chi$-squared law.
\begin{thm}
\cite{guionnet-jones-shlyakhtenko1} Assume that $\mathcal{P}$ is
a subfactor planar algebra. Then trace $\tau_{TL}$ is non-negative
definite. If $\delta>1$, then the von Neumann algebra $M_{0}(\mathcal{P})=W^{*}(\tau_{TL})$
generated in the GNS representation is a II$_{1}$ factor.
\end{thm}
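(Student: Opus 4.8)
The plan is to reduce the statement to a combination of the structural identity in Lemma~\ref{lem:cu} and the known free-probabilistic analysis of free Araki--Woods / free Poisson models. First I would unpack the definition of $\tau_{TL}$ from Figure~\eqref{fig:VoicTraceAndCup}(a): the Voiculescu trace on $(\mathcal{P},\wedge_{0})$ is obtained by summing over all Temperley--Lieb closures with the appropriate number of strings, which expresses $\tau_{TL}(w_{1}\wedge_{0}\cdots\wedge_{0}w_{n})$ as a sum over non-crossing pairings of the boundary points, each pairing contributing $\delta^{(\#\text{loops})}$ times the planar-algebra evaluation of the associated tangle. This is the exact non-commutative analog of the Wick/cumulant formula for a free semicircular (or, after the $\cup$ change of variables, free Poisson) family, and the first task is to make that identification precise: one shows $(\mathcal{P},\wedge_{0},\tau_{TL})$ is, as a non-commutative probability space, generated by a collection of ``free creation-annihilation'' type operators on a Fock-like module built from $\bigoplus_{k}P_{k}$, with $TL$ sitting inside as the scalars-of-the-pairing.

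Non-negativity of $\tau_{TL}$ is then handled exactly as in the Fock-space picture: because $\mathcal{P}$ is a subfactor planar algebra, the canonical bilinear forms on each $P_{k}$ (Figure~\ref{fig:Canonical-bilinear-form-and-TL}(a)) are positive semidefinite, the involution on $\mathcal{P}$ is compatible with orientation-preserving tangles, and sphericality makes the trace property of $\tau_{TL}\circ\mathcal{E}_{k}$ hold; assembling these, $\tau_{TL}(x^{*}\wedge_{0}x)\geq 0$ follows because the summation over non-crossing pairings that defines it is literally an inner product $\langle\xi(x),\xi(x)\rangle$ on the associated Hilbert module, so the GNS construction goes through and produces a tracial von Neumann algebra $M_{0}(\mathcal{P})=W^{*}(\tau_{TL})$. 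The positivity argument is essentially bookkeeping once the Fock model is in place.

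For the factoriality claim when $\delta>1$, I would argue as follows. By Lemma~\ref{lem:cu} the element $\cup\in TL\subset(\mathcal{P},\wedge_{0},\tau_{TL})$ has a free Poisson law of parameter $\delta$; since $\delta>1$ this law has no atoms (its support is the interval $[(1-\sqrt{\delta})^{2},(1+\sqrt{\delta})^{2}]$ with no point mass at $0$), so $W^{*}(\cup)$ is a diffuse abelian subalgebra. The cleanest route to factoriality is to exhibit $M_{0}(\mathcal{P})$ in the same family as the free-group-factor/free-Araki--Woods algebras analyzed by Voiculescu and Shlyakhtenko: show that the generators $\{X_{i}\text{ corresponding to a basis of }P_{1}\}$ together with $TL$ generate the \emph{same} algebra as a system of free creation operators $\ell(e)+\ell(f)^{*}$ on the full Fock module $\mathcal{F}(\bigoplus_{k\geq 1}P_{k})$, and then invoke the standard fact that such full-Fock constructions on a Hilbert module of dimension $>1$ yield II$_{1}$ factors (the $\delta>1$ hypothesis is exactly what guarantees the generating module is ``big enough'' — $\dim P_{1}\ge \delta > 1$). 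Concretely: (i) identify the center of $M_{0}(\mathcal{P})$; (ii) use the TL subalgebra's flow/grading (the tangles rotating marked points) to show any central element commutes in particular with $\cup$, hence lies in $W^{*}(\cup)'\cap M_{0}(\mathcal{P})$; (iii) show that this relative commutant, together with the remaining generators, forces centrality down to scalars via a freeness/asymptotic-freeness argument, using that distinct ``colors'' $i\in\{1,\dots,K\}$ give rise to freely independent pieces in the polynomial model and, for general $\mathcal{P}$, that the planar structure still produces enough free independence among a spanning set of $P_{1}$.

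The main obstacle I anticipate is step (iii): unlike the polynomial planar algebra, a general subfactor planar algebra need not decompose $P_{1}$ into a ``free product of colors,'' so one cannot directly quote freeness. The right fix, and the technically heavy part of \cite{guionnet-jones-shlyakhtenko1}, is to realize $M_{0}(\mathcal{P})$ as an amalgamated free product (or a free-Araki--Woods-type algebra) over the finite-dimensional algebra $P_{1}$ with its canonical trace, where the ``letters'' are the operators attached to $P_{2}$, etc.; factoriality then follows from Ueda-type / Dykema-type results on freeness with amalgamation over a finite-dimensional base, using diffuseness of the $\cup$-distribution (hence $\delta>1$) to rule out the degenerate case where the amalgamated free product fails to be a factor. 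In short: the positivity and the GNS/tracial part are routine given the Fock model, Lemma~\ref{lem:cu} supplies the crucial diffuse abelian subalgebra, and the real work is the amalgamated-free-product identification that upgrades ``no atoms at $0$'' to ``trivial center.''
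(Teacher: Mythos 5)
Your treatment of positivity is in the same spirit as the paper's: the paper's sketch invokes the ``orthogonal approach'' of \cite{jones-walker-shlyakhtenko:orthogonal}, which identifies $L^{2}(\tau_{TL})$ with the $L^{2}$-direct sum $\bigoplus_{k}P_{k}$ equipped with the canonical positive bilinear forms of Figure~\ref{fig:Canonical-bilinear-form-and-TL}(a), and your Fock-module picture is a version of the same idea. Be aware, though, that the Temperley--Lieb sum defining $\tau_{TL}$ is \emph{not} ``literally'' an inner product $\langle\xi(x),\xi(x)\rangle$ in the obvious basis: the Gram matrix of TL closures is far from diagonal, and its positivity (even for $\mathcal{P}=TL$ itself) is exactly the nontrivial content that the orthogonalizing change of basis supplies. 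So this half is an acceptable plan, but the ``bookkeeping'' you defer is the actual proof.

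The factoriality half has a genuine gap. Your step (ii) is vacuous: every central element commutes with $\cup$ automatically, so ``the center lies in $W^{*}(\cup)'\cap M_{0}(\mathcal{P})$'' carries no information. What the paper actually proves, and what you never establish, is that $W^{*}(\cup)$ is a \emph{maximal abelian} subalgebra of $M_{0}(\mathcal{P})$; only then is the center trapped inside the abelian algebra $W^{*}(\cup)$, after which a further, separate argument shows it is trivial. Diffuseness of the free Poisson law of $\cup$ for $\delta>1$ (your use of Lemma~\ref{lem:cu}) gives that $W^{*}(\cup)$ is diffuse, but diffuseness of one abelian subalgebra says nothing about the center of the ambient algebra. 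Your proposed substitute --- realizing $M_{0}(\mathcal{P})$ as an amalgamated free product and quoting Ueda/Dykema-type factoriality --- is indeed a viable alternative route (it is essentially the route of \cite{guionnet-jones-shlyakhtenko2,kodiyalam-sunder:depth2,kodiyalam-sunder:interpolated}), but you explicitly leave that identification as ``the real work,'' so nothing in the proposal closes the argument. Finally, the heuristic ``$\dim P_{1}\geq\delta>1$'' is false: for the Temperley--Lieb planar algebra $\dim P_{1}=1$ for every $\delta$, yet $M_{0}(TL)$ is a factor whenever $\delta>1$; the role of $\delta>1$ shows up in the free-group parameter $t=1+2(\delta-1)I$ of Theorem~\ref{thm:IsomClassTau0}, not in the linear dimension of $P_{1}$.
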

There are several ways in which one can obtain this statement. One
such way is show explicitly that the Hilbert space $L^{2}(\tau_{TL})$
can be identified with the $L^{2}$ direct sum of the spaces making
up the planar algebra \cite{jones-walker-shlyakhtenko:orthogonal}.
To prove that $M_{0}(\mathcal{P})$ is a factor, one first shows that
the element $\cup$ generates a maximal abelian sub-algebra. Thus
the center of $M$ is contained in $W^{*}(\cup)$; some further analysis
shows that the center is in fact trivial.

In a similar way one can prove:
\begin{thm}
\cite{guionnet-jones-shlyakhtenko1} For a subfactor planar algebra\emph{
$\mathcal{P}$, consider} the trace $\tau_{TL}^{n}$ on $(\mathcal{P},\wedge_{n})$
given by $\tau_{TL}\circ\mathcal{E}_{n}$. Then $\tau_{TL}^{n}$ is
non-negative definite, and the von Neumann algebra $M_{n}(\mathcal{P})=W^{*}(\tau_{TL}^{n})$
is a II$_{1}$ factor whenever $\delta>1$. 
\end{thm}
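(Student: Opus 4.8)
The plan is to reduce the statement for $(\mathcal{P},\wedge_n)$ to the already-established statement for $(\mathcal{P},\wedge_0)$ by exhibiting the algebra $(\mathcal{P},\wedge_n)$, together with its trace $\tau_{TL}^n = \tau_{TL}\circ\mathcal{E}_n$, as a corner (or "compressed/amplified" version) of a planar algebra probability space of the type handled by the previous theorem. The conditional expectation $\mathcal{E}_n\colon(\mathcal{P},\wedge_n)\to(\mathcal{P},\wedge_0)$ given by Figure \eqref{fig:VoicTraceAndCup}(c) is the key tool: since $\tau_{TL}^n$ is by definition $\tau_{TL}\circ\mathcal{E}_n$ and $\tau_{TL}$ was shown to be non-negative definite on $(\mathcal{P},\wedge_0)$, positivity of $\tau_{TL}^n$ follows once one checks that $\mathcal{E}_n$ is itself a completely positive (indeed, trace-preserving conditional-expectation-like) map for the relevant $*$-structures. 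This is a diagrammatic verification: one draws the tangle computing $\mathcal{E}_n(x^* \wedge_n x)$ and recognizes it as a sum of diagrams of the form $y^*\wedge_0 y$ with nonnegative coefficients coming from closed TL loops, using the spherical symmetry and the non-negativity of the canonical bilinear form from Figure \ref{fig:Canonical-bilinear-form-and-TL}(a).

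Next I would identify the von Neumann algebra $M_n(\mathcal{P}) = W^*(\tau_{TL}^n)$ concretely. The natural guess, parallel to the subfactor picture, is that $M_n(\mathcal{P})$ is isomorphic to a compression $p\, M_0(\mathcal{Q})\, p$ (or an amplification) of the algebra $M_0$ attached to an auxiliary planar algebra $\mathcal{Q}$ built from $\mathcal{P}$ by shifting the base point by $n$ strings — i.e. the "$n$-cabling" or the planar algebra $\mathcal{P}_{n,+}$ obtained by adding $n$ through-strings. Concretely, $\wedge_n$ on $\bigoplus_{m\ge n}P_m$ is literally the $\wedge_0$ product of the shifted planar algebra whose $m$-box space is $P_{n+m}$, and the trace $\tau_{TL}^n$ is the Voiculescu trace of that shifted planar algebra up to normalization by the TL weight of $n$ closed-up strings. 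If $\mathcal{P}$ is a subfactor planar algebra with parameter $\delta$, so is this shifted object (with the same $\delta$, after rescaling the trace), so the previous theorem applies verbatim: $\tau_{TL}^n$ is non-negative definite and $W^*$ of it is a $\mathrm{II}_1$ factor when $\delta>1$.

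Once that identification is made, the factoriality argument mirrors the $n=0$ case: show that the image of the element $\cup$ (or its $\wedge_n$-analog, the "cup" joining the last two external points past the $n$ through-strings) generates a MASA in $M_n(\mathcal{P})$, using Lemma \ref{lem:cu} to know its spectral distribution (free Poisson of parameter $\delta$, which for $\delta>1$ has no atoms at $0$ and hence a diffuse generator). The center of $M_n(\mathcal{P})$ then sits inside $W^*(\cup)$, and a further diagrammatic computation — commuting the putative central element past the generators coming from individual $P_k$'s — shows the center is trivial. The place where one must be slightly careful is the normalization: $\mathcal{E}_n$ composed with $\tau_{TL}$ differs from the Voiculescu trace of the shifted planar algebra by the factor $\delta^n$ (the value of $n$ closed loops), so one should either renormalize $\tau_{TL}^n$ or track this constant; it does not affect positivity or factoriality for $\delta>1$ but it does matter for identifying which finite von Neumann algebra (and which trace) one gets.

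The main obstacle I expect is the clean identification of $(\mathcal{P},\wedge_n,\tau_{TL}^n)$ with the Voiculescu-trace data of the shifted/cabled planar algebra, and in particular verifying that the conditional expectation $\mathcal{E}_n$ is genuinely trace-preserving and positive at the diagrammatic level — this is where the spherical symmetry and the positivity of the canonical form in Figure \ref{fig:Canonical-bilinear-form-and-TL}(a) must be used carefully, and where an unguarded diagram manipulation could silently introduce a sign or a wrong loop count. Everything after that identification is a routine transcription of the proof of the preceding theorem, so the real content is packaging $\wedge_n$ as a $\wedge_0$ product of a modified planar algebra and checking the compatibility of the traces.
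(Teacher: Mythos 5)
Your overall architecture --- positivity of $\tau_{TL}^{n}$ deduced from positivity at level $0$, and factoriality via the subalgebra $W^{*}(\cup)$ being maximal abelian and hence containing the center --- is the right shape, and the endgame you describe is exactly the paper's: the paper proves this theorem ``in a similar way'' to the $\wedge_{0}$ case, i.e.\ by running the same argument directly at level $n$ (identify $L^{2}(\tau_{TL}^{n})$ with an orthogonal direct sum of the spaces $P_{k}$, show $\cup$ generates a MASA in $M_{n}(\mathcal{P})$, then check by a diagrammatic computation that the center is trivial), rather than by reducing to the level-$0$ theorem.

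The reduction you propose, however, has a genuine gap at its pivot point. The ``shifted'' object with $m$-box space $Q_{m}=P_{n+m}$ has $0$-box space $Q_{0}=P_{n}$, which for $n\geq1$ is generally \emph{not} one-dimensional; it therefore fails the standing hypothesis on subfactor planar algebras under which the preceding theorem was proved, so the claim that ``the previous theorem applies verbatim'' is false. This is not cosmetic: $(\mathcal{P},\wedge_{n})$ contains $P_{n}$, with its ordinary finite-dimensional $C^{*}$-algebra multiplication, as a unital subalgebra, and $P_{n}$ typically has nontrivial center. Any honest extension of the level-$0$ theorem to planar algebras with $\dim P_{0}=d>1$ would a priori yield only a finite direct sum of factors, one summand per minimal central projection of $P_{n}$; showing that these blocks fuse into a single II$_{1}$ factor is precisely the ``further analysis'' of the center that your reduction was meant to avoid, so factoriality cannot be outsourced to the $n=0$ case this way. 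Relatedly, your positivity step rests on the assertion that $\mathcal{E}_{n}$ is completely positive, but $\mathcal{E}_{n}(x^{*}\wedge_{n}x)$ is a partial-trace-type element that is not visibly a sum of $\wedge_{0}$-squares; the clean route (and the one the paper cites) is the explicit orthogonal decomposition of $L^{2}(\tau_{TL}^{n})$, which gives non-negative definiteness of $\tau_{TL}^{n}$ directly without factoring through $\mathcal{E}_{n}$.
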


\subsection{Application: constructing a subfactor realizing a given planar
algebra}

The following tangle gives rise to a natural inclusion from $M_{0}(\mathcal{P})$
into $M_{1}(\mathcal{P})$:

\begin{center}
\includegraphics[bb=0bp 0bp 150bp 150bp,clip,scale=0.55]{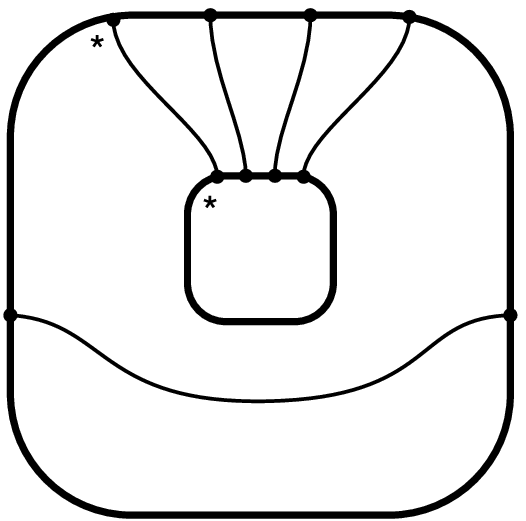}
\par\end{center}

\noindent It turns out that this makes $M_{0}(\mathcal{P})$ into
a finite-index subfactor of $M_{1}(\mathcal{P})$, which \emph{canonically
realizes $\mathcal{P}$:}
\begin{thm}
\cite{guionnet-jones-shlyakhtenko1} (a) The inclusions $M_{0}(\mathcal{P})\subset M_{1}(\mathcal{P})\subset\cdots\subset M_{n-1}(\mathcal{P})\subset M_{n}(\mathcal{P})$
are canonically isomorphic to the tower of basic constructions for
$M_{0}(\mathcal{P})\subset M_{1}(\mathcal{P})$. (b) The planar algebra
associated to the inclusion $M_{0}(\mathcal{P})\subset M_{1}(\mathcal{P})$
is again $\mathcal{P}$.
\end{thm}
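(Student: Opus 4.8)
The plan is to establish the two claims (a) and (b) by exploiting the explicit combinatorial identification of the Hilbert spaces $L^2(\tau_{TL}^n)$ with the planar algebra, together with the fact that the inclusion tangle and the conditional expectation tangle $\mathcal{E}_n$ are themselves planar-algebraic objects, so that the Jones basic construction is visible diagrammatically. First I would set up the picture cleanly: using the identification of $L^2(\tau_{TL})$ with the $L^2$-direct sum $\bigoplus_k P_k$ (as in \cite{jones-walker-shlyakhtenko:orthogonal}), I would describe the action of $M_0(\mathcal{P})$ on this Hilbert space and likewise the action of $M_1(\mathcal{P})$ on $L^2(\tau_{TL}^1)$, checking that the inclusion tangle above induces a trace-preserving $*$-homomorphism $M_0(\mathcal{P}) \hookrightarrow M_1(\mathcal{P})$ (trace-preservation is exactly the compatibility $\tau_{TL}^1 \circ (\text{inclusion}) = \tau_{TL}$, which is a routine diagram manipulation once one knows that capping off one pair of strings in the $\wedge_1$ picture recovers $\wedge_0$). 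The finiteness of the index should then follow because the Jones projection $e_1$, given by the appropriate small Temperley--Lieb tangle, lies in $M_1(\mathcal{P})$, implements the conditional expectation $\mathcal{E}_1 \colon M_1(\mathcal{P}) \to M_0(\mathcal{P})$, and satisfies the Jones relations with the right value of $\delta$.

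For part (a), the key is to identify $M_n(\mathcal{P})$ with the result of the basic construction applied to $M_{n-1}(\mathcal{P}) \subset M_n(\mathcal{P})$ — equivalently $\langle M_n(\mathcal{P}), e_n\rangle \cong M_{n+1}(\mathcal{P})$. I would prove this inductively: the basic construction for $M_{n-1} \subset M_n$ is generated by $M_n$ and the Jones projection $e_n$ implementing $\mathcal{E}_n$, and I would exhibit $e_n$ as an explicit Temperley--Lieb element sitting inside $M_{n+1}(\mathcal{P})$, show $e_n M_n(\mathcal{P}) e_n = \mathcal{E}_n(M_n(\mathcal{P})) e_n$ diagrammatically, and then show that $M_n(\mathcal{P})$ together with $e_n$ generates all of $M_{n+1}(\mathcal{P})$ — the last point amounts to a density/spanning statement: every element of $P_{n+1+m}$ can be written as a sum of products $x\, e_n\, y$ with $x,y \in M_n(\mathcal{P})$, which one reads off by cutting a generic diagram along the string that $e_n$ creates. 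A clean way to package this is to verify that the tower $(M_n(\mathcal{P}))$, the projections $e_n$, and the traces satisfy Popa's abstract characterization of the standard $\lambda$-lattice / symmetric enveloping tower, so that uniqueness of the basic construction does the rest; this also feeds directly into (b).

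For part (b), I would compute the higher relative commutants $M_0(\mathcal{P})' \cap M_k(\mathcal{P})$ and show they are isomorphic, as a planar algebra, to $P_k$. One shows first that an element of $M_k(\mathcal{P})$, viewed inside the tower, commutes with $M_0(\mathcal{P})$ exactly when the corresponding planar-algebra element is ``annular-invariant'' in the relevant sense — the relative commutant condition translates, via the action of $M_0(\mathcal{P})$ by the $\wedge_0$-type tangles, into the statement that wrapping strings around does nothing, which by a standard argument pins the commutant down to a copy of $P_k$. Then one checks that the planar-algebra operations on the standard invariant of a subfactor (composition of intertwiners, the Jones projections, rotation, conditional expectations) correspond under this identification to the operations $Op(T)$ on $\mathcal{P}$; the Jones projections match by construction, $TL$ matches because $TL \hookrightarrow \mathcal{P}$ is the canonical image, and one invokes that a subfactor planar algebra is generated by $TL$ together with the relative commutants under these operations, so agreement on generators gives agreement everywhere. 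I expect the main obstacle to be exactly this last identification — proving that the \emph{planar-algebraic} structure (not just the individual vector spaces) coming from the subfactor $M_0(\mathcal{P}) \subset M_1(\mathcal{P})$ reconstructs the original $Op(T)$ — since it requires matching an a priori infinite list of multilinear operations, and the natural route is to reduce to a generating set of tangles (multiplication, inclusion, Jones projection, rotation, and the conditional expectations $\mathcal{E}_k$) and check each of these separately against its subfactor-theoretic counterpart, using the explicit $L^2$-model throughout to make the diagrammatic manipulations rigorous.
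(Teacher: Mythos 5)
The paper itself gives no proof of this theorem --- it is stated with a citation to \cite{guionnet-jones-shlyakhtenko1} --- so your outline has to be measured against the argument given there. Your plan is essentially that argument: realize the Jones projections as normalized Temperley--Lieb cup diagrams inside the graded algebras, verify the Pimsner--Popa--type characterization of the basic construction (projection implementing the conditional expectation, generation, compatibility of traces), identify the higher relative commutants $M_0(\mathcal{P})'\cap M_k(\mathcal{P})$ with the spaces $P_k$, and then match the planar structure on a generating set of tangles, which suffices because (by Popa's axiomatization) the planar operad is generated by multiplication, inclusion, Jones projection, conditional expectation and rotation tangles. So the approach is the right one and the overall architecture is sound.

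Two steps need repair. First, the finite-index step is misstated: a projection implementing $E_{M_0}\colon M_1(\mathcal{P})\to M_0(\mathcal{P})$ cannot lie in $M_1(\mathcal{P})$; it is a Temperley--Lieb element of the \emph{next} algebra $M_2(\mathcal{P})$, and finiteness of the index does not follow from the Jones relations alone but from the Markov property of the trace, $\tau(x e_n)=\delta^{-2}\tau(x)$ for $x\in M_n(\mathcal{P})$. This is the one quantitative diagrammatic computation your sketch never isolates, and it is exactly what pins the index to $\delta^{2}$ and feeds the basic-construction criterion throughout part (a). Second, in part (b) the delicate direction is the inclusion $M_0(\mathcal{P})'\cap M_k(\mathcal{P})\subseteq P_k$: an element of the relative commutant is an arbitrary von Neumann algebra element, not a priori a member of the dense graded subalgebra $\oplus_m P_m$, so the diagrammatic statement that {}``wrapping strings around does nothing'' cannot be applied to it directly. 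The standard fix is to compute the trace-preserving conditional expectation onto the relative commutant on the dense subalgebra, check diagrammatically that its image lies in the finite-dimensional space $P_k$, and conclude by normality; as written, {}``a standard argument pins the commutant down'' is precisely where the analytic content of part (b) lives, and you should spell it out.
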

In other words, we are able to construct a canonical subfactor realizing
the given planar algebra. A construction that does this was given
earlier by Popa \cite{popa:markov,popa:standardlattice,popa:universalconstructions,shlyakht-popa:universal}
using amalgamated free products. In fact, it turns out that our construction
is related to his; in particular, the algebras $M_{i}(\mathcal{P})$
are isomorphic to certain amalgamated free products \cite{guionnet-jones-shlyakhtenko2,kodiyalam-sunder:depth2,kodiyalam-sunder:interpolated}.
We are able to identify the isomorphism classes of the algebras $M_{j}(\mathcal{P})$:
\begin{thm}
\label{thm:IsomClassTau0}\cite{guionnet-jones-shlyakhtenko2,kodiyalam-sunder:depth2,kodiyalam-sunder:interpolated}
Assume that $\dim P_{0}=\mathbb{C}$, $\delta>1$ and $\mathcal{P}$
is finite-depth of global index $I$. Then\[
M_{0}(\mathcal{P})\cong L(\mathbb{F}_{t})\]
where $t=1+2(\delta-1)I$. More generally, $M_{j}(\mathcal{P})=L(\mathbb{F}_{t_{j}})$
with $t_{j}=1+\delta^{-2j}(\delta-1)I$, $j\geq0$.
\end{thm}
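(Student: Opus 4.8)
The plan is to identify each $M_{j}(\mathcal{P})$ with an amalgamated free product over a finite-dimensional algebra and then read off its isomorphism class from the structure theory of such free products; in the $j=0$ case this produces $t=1+2(\delta-1)I$, and the general formula follows from the same computation with rescaled data.

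\emph{Step 1 (the amalgamated free product decomposition).} Using the identification of the GNS Hilbert space $L^{2}(\tau_{TL}^{j})$ with the Hilbert direct sum $\bigoplus_{k\ge j}L^{2}(P_{k})$ furnished by the ``orthogonal approach'' \cite{jones-walker-shlyakhtenko:orthogonal}, one shows that $(\mathcal{P},\wedge_{j},\tau_{TL}^{j})$ is generated by two kinds of data: (i) a finite-dimensional multimatrix algebra $B_{j}$ obtained by truncating $\mathcal{P}$ to levels below the depth $d$ --- for $j=0$ it is $\bigoplus_{v}M_{n_{v}}(\mathbb{C})$, indexed by the even vertices $v$ of the principal graph of $\mathcal{P}$, with matrix sizes $n_{v}$ and trace-weights governed by the Perron--Frobenius eigenvector $\mu$; and (ii) a finite family of elements built from the $\cup$-tangle and from generators of the spaces $P_{k}$, which --- by the Temperley--Lieb description of $\tau_{TL}$ --- turn out to be $B_{j}$-valued semicircular, resp.\ free-Poisson, elements that are $\ast$-free with amalgamation over $B_{j}$ in the sense of Voiculescu's operator-valued free probability. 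This exhibits $M_{j}(\mathcal{P})$ as a free product, amalgamated over $B_{j}$, of matrix amplifications $M_{n_{v}}\!\big(L(\mathbb{F}_{s_{v}})\big)$ of interpolated free group factors together with $B_{j}$-valued semicircular systems, the amplification parameters $n_{v}$ and the ranks $s_{v}$ being dictated by the vertices and edges of the principal graph, and --- crucially --- the free-Poisson parameter of the $\cup$-part being $\delta$ by Lemma \ref{lem:cu}.

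\emph{Step 2 (computing the isomorphism class).} Now invoke the structure theory of free products of hyperfinite and interpolated-free-group-factor algebras over a common finite-dimensional subalgebra (Dykema, Radulescu, and its operator-valued refinement): such a free product is again an interpolated free group factor $L(\mathbb{F}_{t})$, and $t$ is computed from the additivity of the Voiculescu--Dykema free dimension under free product, the normalizations $\operatorname{fdim}L(\mathbb{F}_{s})=s$ and $\operatorname{fdim}\big(\bigoplus_{v}(M_{n_{v}},\alpha_{v})\big)=1-\sum_{v}\alpha_{v}^{2}/n_{v}^{2}$, and Radulescu's amplification formula $\big(L(\mathbb{F}_{t})\big)_{\theta}\cong L(\mathbb{F}_{1+\theta^{-2}(t-1)})$. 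The free dimension contributed by the $\cup$-part is proportional to $\delta-1$ (its free Poisson parameter is $\delta$), and summing the contributions of the vertices $v$, each weighted by $\mu(v)^{2}$, reconstitutes exactly the global index $I=\sum_{v}\mu(v)^{2}$; the bookkeeping then gives $t=1+2(\delta-1)I$ for $M_{0}(\mathcal{P})$. For general $j$ one uses that the truncated algebra $B_{j}$ carries trace-weights rescaled by a factor of order $\delta^{-2j}$ (reflecting the index-$\delta^{2}$ steps of the Jones tower $M_{0}(\mathcal{P})\subset M_{1}(\mathcal{P})\subset\cdots$, already identified above), and propagating this scaling through the same free-dimension computation yields the stated value of $t_{j}$.

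\emph{Main obstacle.} The heart of the argument is Step 1: proving that the elements produced from the planar-algebra generators genuinely are free with amalgamation over $B_{j}$. This forces one to expand $\tau_{TL}^{j}$ of an alternating $\wedge_{j}$-product as an explicit sum over Temperley--Lieb (non-crossing) diagrams --- the Voiculescu trace is by construction such a sum --- and to match the resulting non-crossing combinatorics against the operator-valued moment--cumulant formulae for $B_{j}$-valued freeness; it is precisely here that the planarity of $\tau_{TL}$ is indispensable, and that the finite-depth hypothesis enters, since it is what makes $B_{j}$ finite-dimensional and hence puts us in the range of applicability of the Dykema--Radulescu machinery. A secondary, purely combinatorial, point is to verify that the matrix sizes $n_{v}$ and trace-weights read off from the truncated planar algebra really are the Perron--Frobenius data, so that the free-dimension sum collapses to $I$; and, for the tower part of the statement, to check that the inclusions $M_{j}(\mathcal{P})\subset M_{j+1}(\mathcal{P})$ are compatible with these decompositions, which follows from the basic-construction identification recorded earlier.
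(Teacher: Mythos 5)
Your proposal follows essentially the route of the cited sources \cite{guionnet-jones-shlyakhtenko2,kodiyalam-sunder:depth2,kodiyalam-sunder:interpolated}, which is all the paper itself indicates: realize $M_{j}(\mathcal{P})$ as an amalgamated free product over a finite-dimensional algebra read off from the principal graph (with the operator-valued semicircularity/freeness established by matching the Temperley--Lieb expansion of $\tau_{TL}^{j}$ against the operator-valued moment--cumulant formulas), and then compute the parameter via Dykema--R\u{a}dulescu free-dimension and compression formulas; you have correctly isolated the amalgamated-freeness step as the crux and the finite-depth hypothesis as what keeps the amalgam finite-dimensional. One small point your consistency check should have caught: specializing the displayed general formula to $j=0$ gives $1+(\delta-1)I$ rather than $1+2(\delta-1)I$, so the paper's $t_{j}$ should read $t_{j}=1+2\delta^{-2j}(\delta-1)I$, which is what your compression argument actually produces.
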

Here $L(\mathbb{F}_{t})$ is the interpolated free group factor \cite{dykema:interpolated,radulescu:subfact}:
$L(\mathbb{F}_{t})=pL(\mathbb{F}_{n})p$ where $p$ is a projection
so that $t-1=\tau(p)^{2}(n-1)$.

Of course, it should be noted that rather than considering von Neumann
algebras $M_{j}(\mathcal{P})=W^{*}(\mathcal{P},\wedge_{j},\tau_{TL}\circ E_{j})$
one can also consider the $C^{*}$-algebras $C^{*}(\mathcal{P},\wedge_{j},\tau_{TL}\circ E_{j})$.
Little is known about their structure.

\subsection{Application: the symmetric enveloping algebra}

\begin{figure}
\begin{centering}
\includegraphics[bb=0bp 0bp 370bp 120bp,clip,scale=0.6]{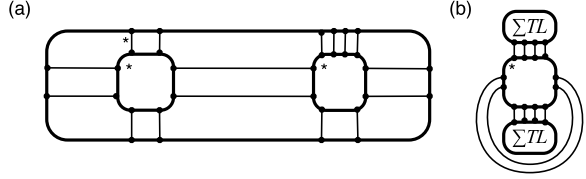}
\par\end{centering}

\caption{\label{fig:envelopingMultandTrace}(a) The multiplication $\boxtimes_{k}$
(there are $k$ horizontal lines joining the input disks). (b) The
trace $\tau\boxtimes_{k}\tau$ (there are $k$ loops).}

\end{figure}

Consider the associative multiplication $\boxtimes_{k}$ defined on
$\oplus_{n\geq k}P_{k}$ by the tangle in Figure \ref{fig:envelopingMultandTrace}(a)
and the trace $\tau\boxtimes_{k}\tau$ on $(\bigoplus_{n\geq k}P_{k},\boxtimes_{k})$
defined in Figure \ref{fig:envelopingMultandTrace}(b).

Let us call $M_{k}\boxtimes M_{k}$ the von Neumann algebra generated
by this algebra in the GNS representation. These algebras are related
to Popa's symmetric enveloping algebra $M_{1}\boxtimes_{e_{0}}M_{1}^{op}$.
For $k=1$ we obtain exactly the symmetric enveloping algebra, at
least in the Temperley-Lieb case.

The symmetric enveloping algebra was introduced by Popa as an important
analytical tool in the study of the {}``quantum symmetry'' behind
a planar algebra. For example, such analytic properties as amenability,
property (T) and so on are encoded by the symmetric enveloping algebra
\cite{popa-symmEnveloping}.

\section{Random matrices and Planar algebras.}

\subsection{GUE and the Voiculescu trace $\tau_{TL}$}

Let $M_{N\times N'}$ denote the linear space of complex $N\times N'$
matrices. Let $K=1,2,\dots$ be an integer, and endow $(M_{N\times N'})^{K}$
with the Gaussian measure\begin{multline*}
d\mu^{(N,N')}(A_{1},\dots,A_{K},A_{1}^{*},\dots,A_{K}^{*})\\
=\frac{1}{Z_{N}}\exp(-\frac{1}{2}NTr(\sum A_{j}^{*}A_{j}))\ dA_{1}\cdots dA_{K}dA_{1}^{*}\cdots dA_{K}^{*}.\end{multline*}
Here $dA_{j}dA_{j}^{*}$ stands for Lebesgue measure on the $j$-th
copy of $M_{N\times N'}$. 

A $K$-tuple of matrices $(A_{1},\dots,A_{K})$ chosen at random from
$(M_{N\times N'})^{K}$ according to this measure is called the Gaussian
Unitary Ensemble (GUE). 

Let $Q$ be a non-commutative polynomial in $X_{1},\dots,X_{K},X_{1}^{*},\dots,X_{K}^{*}$
which is a linear combination of monomials of the form $X_{i_{1}}X_{j_{1}}^{*}\cdots X_{i_{p}}X_{j_{p}}^{*}$
(in other words, we can think of $Q$ as an element of $(\mathcal{P},\wedge_{0})$,
where $\mathcal{P}$ is the planar algebra of polynomials, see \S\ref{sub:Planar-algebras-of-polynomials}).
For each $N,N'$, consider the non-commutative law $\tau^{(N,N')}$
defined by\begin{multline*}
\tau^{(N,N')}(Q)\\
=\int\frac{1}{N}Tr(Q(A_{1},\dots,A_{K},A_{1}^{*},\dots,A_{K}^{*}))d\mu^{(N,N')}(A_{1},\dots,A_{K},A_{1}^{*},\dots,A_{K}^{*}).\end{multline*}
The non-commutative law $\tau^{(N,N')}$ captures certain aspects
of the random multi-matrix ensemble $(A_{1},\dots,A_{K})$. For example,
the value of $\tau^{(N)}\left((A_{1}A_{1}^{*})^{p}\right)$ is the
$p$-th moment of the empirical spectral measure associated to $A_{1}A_{1}^{*}$:
if $\lambda_{1}<\cdots<\lambda_{N}$ are the random eigenvalues of
$A_{1}A_{1}^{*}$, then\[
\tau^{(N)}\left((A_{1}A_{1}^{*})^{p}\right)=\mathbb{E}(\sum\lambda_{j}^{p}).\]

In his seminal paper \cite{DVV:random}, Voiculescu showed that the
laws $\tau^{(N)}$ have a limit as $N\to\infty$; rephrasing slightly
he proved:
\begin{thm}
\label{thm:VoicTracePolynomials}{[}Voiculescu{]} With the above notation,
assume that $N,N'\to\infty$ so that $N'/N\to1$. Then $\tau^{(N)}\to\tau_{TL}$,
where $\tau_{TL}$ is the Voiculescu trace on the planar algebra of
polynomials.
\end{thm}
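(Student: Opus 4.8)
The plan is to compute both sides as generating functions over planar diagrams and match them term by term. Fix a monomial $Q = X_{i_1}X_{j_1}^*\cdots X_{i_p}X_{j_p}^*$ viewed as an element $W\in P_p$ of the polynomial planar algebra, and expand $\tau^{(N,N')}(Q)$ by Wick's formula. The expectation $\mathbb{E}\bigl(\frac{1}{N}\mathrm{Tr}\,Q(A_1,\dots,A_K,A_1^*,\dots,A_K^*)\bigr)$ is, after normalization, a sum over all pairings of the $X$-entries with the $X^*$-entries that are compatible with the colour labels (i.e.\ $A_i$ is only ever contracted with $A_j^*$, and only when $i=j$, because of the diagonal covariance $\mathbb{E}(A_i^{ab}\overline{A_j^{cd}})=\frac{1}{N}\delta_{ij}\delta_{ac}\delta_{bd}$). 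Each such pairing, drawn on the disk $D(W)$ with $2p$ boundary points, produces a chord diagram; closing up the trace turns it into a collection of loops, and the matrix index sums contribute a factor of $N^{(\#\text{loops of one colour})}N'^{(\#\text{loops of the other colour})}$ against the overall $N^{-1}$ and the $N^{-p}$ coming from the $p$ covariances.

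First I would reorganize this sum: the pairings that survive in the large-$N$ limit are exactly the \emph{non-crossing} (planar) ones, since a crossing pairing yields strictly fewer loops and hence a negative power of $N$ — this is the classical genus-expansion / Euler-characteristic bookkeeping going back to 't Hooft and made rigorous by Voiculescu. Second, under the hypothesis $N'/N\to 1$ the two colours of loops are weighted equally, so a planar pairing $\pi$ of $D(W)$ contributes coefficient $1$ in the limit (the total loop count for a planar pairing is exactly $p+1$, cancelling the prefactor). Third, and this is the crux, I would observe that a non-crossing pairing of the $2p$ boundary points of $D(W)$ is precisely a Temperley–Lieb diagram $T\in\mathcal{T}(;p)$, and that the set of monomials $W$ for which a given $T$ is compatible-with-labels, together with the count $C_W$ of admissible colourings of the through-strings, is exactly the combinatorial data defining $Op(T)$ in \S\ref{sub:Planar-algebras-of-polynomials}. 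Thus
\[
\lim_{N\to\infty}\tau^{(N,N')}(Q)=\sum_{T\in\mathcal{T}(;p)\ \text{non-crossing}} \langle Op(T),W\rangle,
\]
and by the description of Figure~\ref{fig:VoicTraceAndCup}(a) the right-hand side is literally $\tau_{TL}(Q)$: the Voiculescu trace is the sum over all $TL$ elements with the appropriate number of strings, paired against $Q$ via the canonical bilinear form of Figure~\ref{fig:Canonical-bilinear-form-and-TL}(a). So the identification of the two combinatorial expansions finishes the proof for monomials, and the general case follows by linearity.

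I expect the main obstacle to be the honest justification that only planar pairings survive, i.e.\ the uniform control of the $N\to\infty$ limit: one must check that the number of pairings is bounded independently of $N$ (which is true — it is $(2p-1)!!$, or fewer once colours are imposed), that each non-planar term is $O(N^{-2})$ with a constant depending only on $p$, and that when $N'\ne N$ the discrepancy between the two loop colours is controlled so that $N'/N\to1$ genuinely forces the symmetric weight. This is exactly the content of Voiculescu's theorem \cite{DVV:random}, so in the present exposition I would simply invoke it; the only new work is the dictionary between non-crossing pairings of $D(W)$ and the operations $Op(T)$ for $T\in\mathcal{T}(;p)$, which is immediate once the planar-algebra-of-polynomials structure of \S\ref{sub:Planar-algebras-of-polynomials} is unwound, together with the remark (already in the excerpt) that $TL$ embeds into any planar algebra precisely as the image of the tangles with no input disks.
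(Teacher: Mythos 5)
Your proposal is correct and coincides with what the paper does: the paper gives no argument beyond citing Voiculescu's moment computation \cite{DVV:random} and noting that the statement is a ``slight rephrasing'' of it, and your Wick-expansion/genus-counting sketch --- planar pairings contributing $(N'/N)^{\#}\to 1$, crossings suppressed by even negative powers of $N$, together with the dictionary identifying non-crossing pairings of $D(W)$ compatible with the labels with Temperley--Lieb tangles and hence with the capping tangle of Figure \ref{fig:VoicTraceAndCup}(a) --- is exactly that rephrasing plus the standard proof of the underlying theorem. Nothing is missing.
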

One can re-derive some well-known random matrix results from this
theorem. For example, combining it with Lemma \ref{lem:cu}, one can
recover convergence of singular values of block random GUE matrices
to the Marcenko-Pastur law \cite{marcenko-pastur}.

\subsection{The case of a general planar algebra}

It turns out that Theorem \ref{thm:VoicTracePolynomials} also holds
in the context of more general planar algebras (i.e., {}``in the
presence of symmetry''). We now describe the appropriate random matrix
ensembles.

\subsubsection{Graph planar algebras}

Our construction relies on the following fact \cite{jones:graphPlanar,guionnet-jones-shlyakhtenko1}:
\begin{prop}
Every planar algebra $\mathcal{P}$ is a subalgebra (in the sense
of planar algebras) of some graph planar algebra $\mathcal{P}^{\Gamma}$.
\end{prop}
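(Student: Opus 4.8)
The plan is to realize the abstract planar algebra $\mathcal{P}$ concretely as a collection of spaces of operators between tensor powers of a bimodule, and then identify those operators with functions on loops of a bipartite graph. First I would invoke the main theorem of Jones (stated in \S\ref{sub:planarAlgebraProbSpace}) that a subfactor planar algebra is the standard invariant of some subfactor $M_{0}\subset M_{1}$; equivalently one has the principal bipartite graph $\Gamma$ encoding the fusion of the relevant $M_{0}$-$M_{0}$ (and $M_{0}$-$M_{1}$) bimodules. Then I would define $\mathcal{P}^{\Gamma}$ by setting $P^{\Gamma}_{k}$ to be the vector space with basis the loops of length $2k$ on $\Gamma$ based at a fixed vertex (more precisely, with basis indexed by paths of length $k$ on the bipartite graph together with their returns), and specify $Op(T)$ for a tangle $T$ by the ``state sum'' prescription: given loops fed into the input disks, one glues them into $T$, and the coefficient of an output loop is obtained by summing over all ways of labeling the strings of $T$ by edges of $\Gamma$ compatibly with the prescribed boundary loops, with each closed loop of $T$ weighted by $\delta$ (this is the same combinatorial recipe used in \S\ref{sub:Planar-algebras-of-polynomials} for the polynomial planar algebra, which is itself the graph planar algebra of the graph with one vertex and $K$ edges).

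The key steps, in order, are: (1) check that this prescription is well defined and natural with respect to composition of tangles, so that $\mathcal{P}^{\Gamma}=(P^{\Gamma}_{k})_{k\ge 0}$ is genuinely a planar algebra --- this is Jones' graph planar algebra construction \cite{jones:graphPlanar}, so I would simply cite it; (2) construct the embedding $\iota:\mathcal{P}\hookrightarrow\mathcal{P}^{\Gamma}$. For (2) the natural route is the ``action on the graph'' map: elements of $P_{k}$ act on the path Hilbert space of $\Gamma$ (the $L^{2}$ of the tower $M_{0}\subset M_{1}\subset\cdots$, or of the bimodule tensor powers) via the planar operations, and this action is by construction compatible with all tangles; concretely, one sends $x\in P_{k}$ to the function on loops whose value on a loop $\ell$ is obtained by capping $x$ off against $\ell$ using the relevant cup/Temperley--Lieb tangles. (3) Verify that $\iota$ is a morphism of planar algebras, i.e. intertwines $Op(T)$ for every $T$ --- this follows from naturality once the action in (2) is itself defined by planar tangles. (4) Verify that $\iota$ is injective; this is where the subfactor planar algebra hypotheses (spherical, positive-definite bilinear form, $\dim P_{0}=1$) are used: the form in Figure~\ref{fig:Canonical-bilinear-form-and-TL}(a) is faithful, and $\iota$ preserves it (up to the normalization coming from the Perron--Frobenius eigenvector of $\Gamma$), so $\ker\iota=0$.

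The main obstacle I expect is step (4), injectivity, together with getting the normalizations in step (2) exactly right. The graph $\Gamma$ that works is not arbitrary: one must take (a version of) the principal graph, and the embedding is built using the Perron--Frobenius data of $\Gamma$ (the weights $\mu(v)$ on vertices), so that the partition-function/state-sum pairing on $\mathcal{P}^{\Gamma}$ restricts, under $\iota$, to a positive multiple of the canonical inner product on $\mathcal{P}$. Showing that this pairing is nondegenerate on the image --- equivalently that $\iota$ is isometric for the right inner products --- is the technical heart; once it is isometric, injectivity is automatic from positive-definiteness of the form on the subfactor planar algebra $\mathcal{P}$. Everything else (well-definedness, naturality, compatibility with the $*$-structure and with the $TL$ sub-planar-algebra noted at the end of \S\ref{sub:The-Temperley-Lieb-}) is bookkeeping with planar tangles. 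Since all of this is carried out in \cite{jones:graphPlanar} and \cite{guionnet-jones-shlyakhtenko1}, in the write-up I would sketch the construction of $\mathcal{P}^{\Gamma}$ and of $\iota$ and then refer to those papers for the verification of injectivity.
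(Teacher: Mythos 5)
The paper offers no proof of this proposition at all---it is stated as a fact with the citations \cite{jones:graphPlanar,guionnet-jones-shlyakhtenko1}---and your sketch is a faithful outline of exactly the argument in those references (realize $\mathcal{P}$ as the standard invariant of a subfactor, take the principal bipartite graph $\Gamma$ with its Perron--Frobenius data, form the state-sum graph planar algebra, embed via the action on the path space, and get injectivity from positivity of the canonical form), so you are taking essentially the same route as the source the paper relies on. The one point to correct is that the tangle action defining $\mathcal{P}^{\Gamma}$ already requires the spin factors $\sqrt{\mu(v)/\mu(w)}$ at local extrema of strings (a closed loop contributes $\delta$ only after summing edge labels against these weights), so the Perron--Frobenius normalization belongs in the definition of $\mathcal{P}^{\Gamma}$ itself, not only in the construction of the embedding $\iota$.
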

Here the graph planar algebra $\mathcal{P}^{\Gamma}$ is a planar
algebra canonically associated to an arbitrary bipartite graph, taken
with its Perron-Frobenius eigenvector $\mu$ (if $\mathcal{P}$ is
finite depth, $\Gamma$ can be taken to be a finite graph). The spaces
$\mathcal{P}_{k}^{\Gamma}$ have as linear bases the sets of closed
paths of length $2k$ on $\Gamma$. The planar algebra structure is
defined in a manner analogous to the case of the polynomial planar
algebra, \S\ref{sub:Planar-algebras-of-polynomials}; see \cite{jones:graphPlanar}
for details. The graph $\Gamma$ can be chosen to be finite if the
planar algebra is finite depth (in particular, if $\delta<2$).

\subsubsection{\label{sub:RandomMatrixGraphGaussian}Random matrix ensembles on
graphs}

Let $\mathcal{P}$ be a planar algebra of finite depth. Thus $\mathcal{P}\subset\mathcal{P}^{\Gamma}$
for some finite bi-partite graph. Let us write $\mu(v)$ for the value
of the Perron-Frobenius eigenvector at a vertex $v$ of $\Gamma$. 

To an oriented edge $e$ of $\Gamma$ which starts at $v$ and ends
at $w$ we associated a matrix $X_{e}$ of size $[N\mu(v)]\times[N\mu(w)]$
(here $[\cdot]$ denotes the integer part of a number). To a path
$e_{1}\cdots e_{n}$ in the graph we associate the product of matrices
$X_{e_{1}}\cdots X_{e_{n}}$ (here $X_{e^{o}}=X_{e}^{*}$ if $e^{o}$
is the edge $e$ but with opposite orientation). 

Thus any element $W\in\bigoplus_{k}P_{k}$ is a specific expression in terms
of the matrices $\{X_{e}\}_{e\in\mathcal{E}(\Gamma)}$. For example,
let $\cup$ be as in Figure \ref{fig:VoicTraceAndCup}(b). Then $\cup=\sum_{e}\sqrt{\frac{\mu(v)}{\mu(w)}}X_{e}X_{e}^{*}$,
the sum taken over all positively oriented edges; here $v$ and $w$
are, respectively, the start and end of $e$. Let us write $W=\sum_{v}W_{v}$,
where $W_{v}$ is in the linear span of closed paths that start at
$v$. Thus for example $\cup_{v}=\sum_{e}\sqrt{\frac{\mu(v)}{\mu(w)}}X_{e}X_{e}^{*}$,
where the sum is taken over all edges $e$ starting at $v$.

With this notation, the expression\[
d\nu_{N}=Z_{N}^{-1}\exp(-N\sum_{v}\mu(v)Tr(\cup_{v}))\prod_{e}dX_{e}\]
makes sense and gives us a probability measure, with respect to which
we can choose our random matrix ensemble $\{X_{e}\}$. 

For any $Q\in P_{k}$, the expression\[
\tau_{N}(Q)=\int\sum_{v}\frac{\mu(v)}{N}Tr(P(Q_{v}(X_{e}:e\in\Gamma)))d\nu_{N}\]
gives rise to a non-commutative law on the non-commutative probability
space $(\mathcal{P}^{\Gamma},\wedge_{0})$ and so in particular on
$(\mathcal{P},\wedge_{0})$. We denote this restriction by $\tau^{(N)}$. 
\begin{thm}
\label{thm:VoicTraceGeneral} With the above notation, $\tau^{(N)}\to\tau_{TL}$,
where $\tau_{TL}$ is the Voiculescu trace on the planar $\mathcal{P}$.
\end{thm}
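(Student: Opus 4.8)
The plan is to compute $\tau_N(Q)$ by Wick's formula and to identify its $N\to\infty$ limit, through a topological (genus) expansion, with the value of the Temperley--Lieb tangle of Figure~\ref{fig:VoicTraceAndCup}(a) that \emph{defines} $\tau_{TL}$. By linearity it suffices to take $Q$ a single closed path of length $2k$, i.e.\ a basis element of $P_k^\Gamma$. Since $\tau_{TL}$ is tangle-defined and hence compatible with the planar inclusion $\mathcal P\subset\mathcal P^\Gamma$, it is enough to prove $\tau_N\to\tau_{TL}$ on all of $\mathcal P^\Gamma$ and then restrict to $\mathcal P$; the $\mu$-weighted vertex normalization needed to make the tangle of Figure~\ref{fig:VoicTraceAndCup}(a) scalar-valued on $P_0^\Gamma$ is exactly the one built into the prefactor $\sum_v\mu(v)/N$ defining $\tau_N$.

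First I would record that the ensemble is Gaussian: the weight $\exp(-N\sum_v\mu(v)\,Tr(\cup_v))$ is quadratic in the matrix entries, so $\{X_e\}$ is a family of rectangular complex Gaussian matrices of sizes $[N\mu(v)]\times[N\mu(w)]$, independent across positively oriented edges, with $X_{e^o}=X_e^*$ and diagonal covariance $\mathbb E[(X_e)_{ab}\overline{(X_e)_{cd}}]=(Nc_e)^{-1}\delta_{ac}\delta_{bd}$, where $c_e$ is the coefficient of $Tr(X_eX_e^*)$ read off from $\sum_v\mu(v)\,Tr(\cup_v)$ and depends only on $\mu$ at the two endpoints of $e$. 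Next, writing $Tr(Q_v(X))$ as a sum of monomial traces $Tr(X_{e_1}\cdots X_{e_{2k}})$ indexed by closed paths $e_1\cdots e_{2k}$ (with $X_{e^o}=X_e^*$) and applying Wick's theorem, the expectation becomes a sum over pairings $\sigma$ that match, edge by edge, the legs carrying $X_e$ with those carrying $X_e^*$. Contracting all Kronecker deltas along the cyclic word, each term equals a product of the constants $c_e^{-1}$ times a power of $N$ which is maximal exactly when $\sigma$ is genus zero, i.e.\ non-crossing, all other terms being $O(N^{-2})$ relative to the leading one --- the same genus truncation as in Voiculescu's Theorem~\ref{thm:VoicTracePolynomials}. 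The integer parts enter only via $[N\mu(v)]/N\to\mu(v)$ and produce lower-order corrections.

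It remains to identify the surviving sum. Non-crossing pairings of the $2k$ boundary points of $Q$ are precisely the Temperley--Lieb diagrams $\pi\in\mathcal T(;k)$. For each $\pi$, the residual Kronecker-delta bookkeeping forces the matrix indices running along the strands of $\pi$ to be constant along the corresponding edge-paths and, together with $Q$, to trace out closed loops on $\Gamma$; summing each free index over its loop, the Perron--Frobenius identity $\sum_{w\sim v}\mu(w)=\delta\mu(v)$ collapses the sums so that each closed loop of the glued picture contributes exactly the scalar $\delta$ (matching the Temperley--Lieb rule that a closed loop gives $\delta$), while every remaining factor of $\mu(v)$ --- from the covariances $c_e$, the block sizes $[N\mu(v)]$, and the prefactor $\sum_v\mu(v)/N$ --- telescopes and cancels. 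What is left is precisely the value assigned to $Q$ by the tangle of Figure~\ref{fig:VoicTraceAndCup}(a), i.e.\ by the sum over all Temperley--Lieb closures $T_\pi\in\mathcal T(k;0)$ of the input disk; this value is $\tau_{TL}(Q)$, and restricting to $\mathcal P\subset\mathcal P^\Gamma$ finishes the argument.

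The combinatorial skeleton (the Wick expansion and its genus truncation) is the standard random-matrix computation; the real content, and the main obstacle, is the weight bookkeeping of the third step --- verifying that the powers of $\delta$ produced by the glued loops obey the Temperley--Lieb convention and that all the $\mu(v)$-factors cancel --- and this is precisely where the particular normalization of the weight $\exp(-N\sum_v\mu(v)Tr(\cup_v))$ and of the prefactor $\sum_v\mu(v)/N$ enters: a differently scaled Gaussian would reproduce the same diagrammatics with the wrong scalars. I would also note that no concentration inequality is needed for the convergence as stated, since $\tau^{(N)}$ is itself an expectation; such estimates would be required only to upgrade to almost-sure convergence of the empirical laws. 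An alternative proof bypasses Wick's formula entirely: realize $\{X_e\}$ as a compression --- by the matrix units of the partition of $\{1,\dots,[N\sum_v\mu(v)]\}$ into blocks of sizes $[N\mu(v)]$ indexed by the vertices of $\Gamma$ --- of a single large Gaussian family, and deduce the theorem from Voiculescu's Theorem~\ref{thm:VoicTracePolynomials} together with the planar-algebra inclusion of $\mathcal P^\Gamma$ into the polynomial planar algebra on $|\mathcal E(\Gamma)|$ variables; this trades the Wick computation for functoriality bookkeeping but meets the same normalization point.
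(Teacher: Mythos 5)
Your proposal is correct and follows essentially the approach of the source \cite{guionnet-jones-shlyakhtenko1} that this survey cites for the theorem (the survey itself states the result without proof): a Wick/genus expansion for the Gaussian ensemble attached to the graph, identification of the surviving non-crossing pairings with Temperley--Lieb closures of the input disk, and the Perron--Frobenius relation $\sum_{w\sim v}\mu(w)=\delta\mu(v)$ to convert each closed loop of the glued picture into a factor of $\delta$. You also correctly isolate the one genuinely delicate point --- verifying that the $\mu$-weights coming from the covariances, the block sizes $[N\mu(v)]$, and the normalization $\sum_v\mu(v)/N$ all cancel --- which is precisely where the proof in the cited work does its real work.
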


\subsection{Random matrix ensembles}

More generally, let us assume that we are given a non-commutative polynomial $V(t_{1},\dots,t_{K},t_{1}^{*},\dots,t_{K}^{*})$
which is a sum of monomials of the form $t_{i_{1}}t_{j_{1}}^{*}\cdots t_{i_{p}}t_{j_{p}}^{*}$.  Then 
consider on $(M_{N\times N})^{K}$ the measure\begin{multline}
\label{eq:measuresMuV}
d\mu_{V}^{(N)}(A_{1},\dots,A_{K},A_{1}^{*},\dots,A_{K}^{*})\\
=\frac{1}{Z_{N}}1_{\{\Vert A_{j}\Vert\leq R\}}\exp(-NTr(V(A_{1},\dots,A_{K},A_{1}^{*},\dots,A_{K}^{*})))\\
dA_{1}\cdots dA_{K}dA_{1}^{*}\cdots dA_{K}^{*},\end{multline}
where $dA_{j}$ stands for Lebesgue measure on the $j$-th copy of
$M_{N\times N}$. The constant $Z_{N}$ is chosen so that $\mu_{V}^{(N)}$
is a probability measure (the cutoff $R$ insures that the support
of $\mu_{V}^{(N)}$ is compact). Of course, $R=\infty$ and $V(A_{1},\dots,A_{K})=\sum A_{k}A_{k}^{*}$
corresponds to the Gaussian measure. 

The measures $\mu_{V}^{(N)}$ are matrix analogs of the classical
Gibbs measures $\mu_{V}=Z^{-1}\exp(-V(x))dx$.

Let us call the $K$-tuple of random matrices chosen from $(M_{N\times N}^{sa})^{K}$
at random according to this measure a \emph{random multi-matrix ensemble
}(see \cite[Chapter 5]{guionnet-anderson-zeitouni}\textbf{).}

Certain properties of the random multi-matrix ensemble $A_{1},\dots,A_{K}$
is captured by the non-commutative laws $\tau_{V}^{(N)}$ defined
on the algebra of non-commutative polynomials in $X_{1},\dots,X_{K},X_{1}^{*},\dots,X_{K}^{*}$
by\begin{multline*}
\tau_{V}^{(N)}(Q(X_{1},\dots,X_{K},X_{1}^{*},\dots,X_{K}^{*}))=\\
\int\frac{1}{N}Tr(Q(A_{1},\dots A_{K},A_{1}^{*},\dots,A_{K}^{*}))d\mu_{V}^{(N)}(A_{1},\dots,A_{K},A_{1}^{*},\dots,A_{K}^{*}).\end{multline*}

\subsection{Combinatorial properties of the laws $\tau_{V}^{(N)}$\label{sub:Free-analogs-ofGibbs}}

Remarkably, the laws $\tau_{V}^{(N)}$ have a very nice combinatorial
interpretation. Let $P$, $W_{1},\dots,W_{n}$ be a monomials, and
set $V(t_{1},\dots,t_{K})=(\sum t_{j}t_{j}^{*})+\sum_{j=1}^{n}\beta_{j}W_{j}$.
Define a non-commutative law $\tau_{V}$ by\begin{equation}
\tau_{V}(P)=\sum_{m_{1},\dots,m_{n}\geq0}\sum_{D}\prod_{j=1}^{n}\frac{(-\beta_{j})^{m_{j}}}{m_{j}!}\label{eq:PlanarMaps}\end{equation}
where the summation is taken over all planar tangles $D$ with output
disk labeled by $P$ and having $m_{j}$ interior disks labeled by
$W_{j}$ as in \S\ref{sub:Planar-algebras-of-polynomials}.
\begin{thm}
\cite{guionnet:icm,guionnet-maurelSegala:ALEA} \label{thm:tauVasRMLimit}Let
$P$, $W_{1},\dots,W_{n}$ be monomials, and assume that $V(t_{1},\dots,t_{K})=(\sum t_{j}t_{j}^{*})+\sum_{j=1}^{n}\beta_{j}W_{j}$.
Then for sufficiently small $\beta_{j}$,\[
\tau_{V}^{(N)}(P)=\tau_{V}(P)+O(N^{-2}).\]

\end{thm}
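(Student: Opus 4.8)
**The plan is to establish the identity $\tau_V^{(N)}(P) = \tau_V(P) + O(N^{-2})$ by a perturbative expansion of the Gaussian integral.** First I would expand the exponential $\exp(-N\sum_j \beta_j \operatorname{Tr}(W_j(A,A^*)))$ relative to the Gaussian part $\exp(-N\operatorname{Tr}(\sum_j A_jA_j^*))$, writing
\[
\tau_V^{(N)}(P) = \frac{\displaystyle\sum_{m_1,\dots,m_n\ge 0} \prod_j \frac{(-\beta_j)^{m_j}}{m_j!}\, \mathbb{E}_{\mathrm{GUE}}\!\left[\tfrac1N\operatorname{Tr}(P)\prod_j (N\operatorname{Tr} W_j)^{m_j}\right]}{\displaystyle\sum_{m_1,\dots,m_n\ge 0} \prod_j \frac{(-\beta_j)^{m_j}}{m_j!}\, \mathbb{E}_{\mathrm{GUE}}\!\left[\prod_j (N\operatorname{Tr} W_j)^{m_j}\right]},
\]
where the expectations are against the standard complex Gaussian (GUE) measure $d\mu^{(N)}$ (the cutoff $R$ can be removed at the cost of exponentially small corrections, since for small $\beta_j$ the measure concentrates on bounded matrices). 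The combinatorial heart is then the \emph{genus expansion}: by Wick's theorem, each Gaussian expectation of a product of traces of monomials in $A_e, A_e^*$ is a sum over pairings of the $A$'s with the $A^*$'s, and each pairing is a gluing of the corresponding disks into an oriented surface; the contribution of a surface of genus $g$ with the relevant number of faces and vertices scales like $N^{-2g}$ (this is the classical 't~Hooft/Voiculescu topological expansion). The genus-$0$ (planar) terms are exactly the planar tangles $D$ counted in \eqref{eq:PlanarMaps}: the disk labeled $P$ and the $m_j$ interior disks labeled $W_j$, with strings connecting marked points.

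Second, I would handle the denominator: its genus-$0$ part is precisely the generating function of planar tangles with \emph{no} output disk (vacuum diagrams), and dividing by it implements the standard cancellation of disconnected vacuum components — so that $\tau_V(P)$ as defined in \eqref{eq:PlanarMaps} is the sum over \emph{connected} (or rather, output-disk-connected) planar tangles, matching the genus-$0$ part of the numerator after this division. One must check that the ratio is well-defined and that the division does not reintroduce $N^{-1}$ corrections; this follows because both numerator and denominator are power series in $N^{-2}$ (odd powers of $N^{-1}$ vanish by an orientation/Euler-characteristic parity argument for these oriented ensembles), so the quotient is again a series in $N^{-2}$, giving $\tau_V^{(N)}(P) = \tau_V(P) + O(N^{-2})$.

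Third — and this is where the real analytic work lies — one must justify convergence of the perturbative series for $\beta_j$ small, and, more seriously, justify that the formal $1/N$ expansion is actually an \emph{asymptotic} expansion, i.e. that the error after the genus-$0$ term is genuinely $O(N^{-2})$ uniformly. \textbf{I expect this uniformity and convergence estimate to be the main obstacle.} The standard route (Guionnet--Maurel-Segala) is to bypass the naive term-by-term bound and instead use \emph{Schwinger--Dyson (loop) equations}: the matrix integration-by-parts identity $\mathbb{E}[\partial_i(\cdots)] $-type relations yield a closed hierarchy for the moments $\tau_V^{(N)}$ up to $O(N^{-2})$ corrections, one shows this hierarchy has a unique solution (for small $\beta_j$) which depends analytically on the parameters, and then identifies that solution's $\beta$-expansion with the planar-map generating function \eqref{eq:PlanarMaps} by checking the Schwinger--Dyson equations at the level of formal generating functions of planar tangles. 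Concentration of measure (a log-Sobolev or Brascamp--Lieb inequality for $\mu_V^{(N)}$, valid because $V$ is a small perturbation of the convex Gaussian potential) controls the fluctuations of $\frac1N\operatorname{Tr}(\cdot)$ and is what upgrades the formal expansion to the genuine estimate with error $O(N^{-2})$. Since this theorem is attributed to \cite{guionnet:icm,guionnet-maurelSegala:ALEA}, I would invoke their combinatorial analysis of the loop equations for the remaining bookkeeping rather than reprove it here.
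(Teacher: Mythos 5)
The paper offers no proof of this theorem; it is stated as a citation to Guionnet's ICM address and to Guionnet--Maurel-Segala, and your sketch accurately reproduces the strategy of those references: the formal genus expansion identifying the genus-zero terms with the planar tangles of \eqref{eq:PlanarMaps}, followed by the Schwinger--Dyson (loop) equation argument with concentration of measure to justify that the error is genuinely $O(N^{-2})$ uniformly for small $\beta_{j}$. This is essentially the same approach as the cited proof, and you correctly locate the crux in the loop equations and concentration rather than in term-by-term bounds on the perturbative series (your only loose remark is that the cutoff $R$ is not merely removable --- it is needed for $\mu_{V}^{(N)}$ to be a well-defined probability measure, since the monomials $W_{j}$ need not be bounded below --- but concentration shows it does not affect the moments, which is the point).
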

The right-hand side of \eqref{eq:PlanarMaps} would make sense if
we were to replace $P$ and $W_{j}$ by arbitrary elements of an arbitrary
planar algebra (in fact, as written, equation \eqref{eq:PlanarMaps}
can be taken to occur in the planar algebra of polynomials). The term
$\sum t_{j}t_{j}^{*}$ correpsonds to the element $\cup$ defined
in Figure \ref{fig:VoicTraceAndCup}(b). We thus make the following
definition.
\begin{defn}
Let $\mathcal{P}$ be a planar algebra, and assume that $Q\in P_{k},W_{j}\in P_{k_{j}}$,
$j=1,\dots,n$ are elements of algebra $\mathcal{P}$. Let $V_{\beta}=\cup+\sum_{j}\beta_{j}W_{j}$.
We define the associated \emph{free Gibbs law with symmetry $\mathcal{P}$}
to be the planar algebra law\begin{equation}
\tau_{V_{\beta}}(Q)=\sum_{m_{1},\dots,m_{n}\geq0}\sum_{D}\prod_{j=1}^{n}\frac{(-\beta_{j})^{m_{j}}}{m_{j}!}Op(D)(P,\underbrace{W_{1},\dots,W_{1}}_{m_{1}},\dots,\underbrace{W_{n},\dots,W_{n}}_{m_{n}}).\label{eq:tauVforPlanarAlgebra}\end{equation}
Here the summation takes place over all planar tangles $D$ having
one disk of size $k$, $m_{1}$ input disks of size $k_{1}$, $m_{2}$
disks of size $k_{2}$, etc. and no output disks. 
\end{defn}
One can check that in the case of the planar algebra of polynomials,
\eqref{eq:tauVforPlanarAlgebra} is equivalent to \eqref{eq:PlanarMaps}.
\begin{thm}
\label{thm:tauVforPlanarAlgebra}Assume that $Q\in P_{k},W_{j}\in P_{k_{j}}$,
$j=1,\dots,n$ are elements of a finite-depth planar algebra $\mathcal{P}$,
and let $V_{\beta}=\cup+\sum_{j}\beta_{j}W_{j}$. Then for sufficiently
small $\beta$, the free Gibbs law given by \eqref{eq:tauVforPlanarAlgebra}
defines a non-negative trace on $(\oplus_{k\geq0}P_{k},\wedge_{0})$.
\end{thm}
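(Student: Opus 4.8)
The plan is to realize $\tau_{V_\beta}$ as the large-$N$ limit of a perturbed random matrix model carried by a graph, extending Theorems \ref{thm:VoicTraceGeneral} and \ref{thm:tauVasRMLimit} from the Gaussian/polynomial case to the potential $V_\beta=\cup+\sum_j\beta_jW_j$. First I would reduce to the case of a \emph{graph} planar algebra. Using the embedding of $\mathcal{P}$ into some finite bipartite graph planar algebra $\mathcal{P}^\Gamma$ (available since $\mathcal{P}$ is finite depth), one observes that $\cup$ lies in every planar algebra and each $W_j\in\mathcal{P}$, while $Op(D)$ applied to elements of a planar subalgebra stays in that subalgebra and lands in $P_0=\mathbb{C}$; hence the free Gibbs law on $\mathcal{P}$ defined by \eqref{eq:tauVforPlanarAlgebra} is simply the restriction to $(\oplus_k P_k,\wedge_0)$ of the corresponding law on $(\oplus_k P_k^\Gamma,\wedge_0)$. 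A restriction of a non-negative trace is a non-negative trace, so it suffices to prove the statement for $\mathcal{P}^\Gamma$.

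Next, the model. Keeping the notation of \S\ref{sub:RandomMatrixGraphGaussian}, assign to each oriented edge $e\colon v\to w$ of $\Gamma$ a matrix $X_e$ of size $[N\mu(v)]\times[N\mu(w)]$, and for $W\in P_k^\Gamma$ let $W(X_e)=\sum_v W_v(X_e)\in\bigoplus_v M_{[N\mu(v)]}(\mathbb{C})$ be its realization as in \S\ref{sub:RandomMatrixGraphGaussian}; this realization is a $*$-algebra homomorphism for $\wedge_0$, carrying $\cup$ to $\sum_e\sqrt{\mu(v)/\mu(w)}\,X_eX_e^*$. For $\beta$ small and a fixed cutoff $R$, define, in analogy with \eqref{eq:measuresMuV},
\[
d\nu_{N,\beta}=Z_N^{-1}\,1_{\{\|X_e\|\le R\}}\exp\Big(-N\sum_v\mu(v)\,\mathrm{Tr}\big(V_\beta(X_e)_v\big)\Big)\prod_e dX_e ,
\]
a genuine probability measure, and set $\tau^{(N)}_{V_\beta}(Q)=\int\sum_v\frac{\mu(v)}{N}\mathrm{Tr}\big(Q_v(X_e)\big)\,d\nu_{N,\beta}$ for $Q\in\oplus_k P_k^\Gamma$. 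Because $\mathrm{Tr}$ is a positive trace on each $M_{[N\mu(v)]}(\mathbb{C})$, the weights $\mu(v)$ are positive, and the realization is a $*$-homomorphism, each $\tau^{(N)}_{V_\beta}$ is a non-negative trace on $(\oplus_k P_k^\Gamma,\wedge_0)$; the same argument applied to the matrix realizations of $\wedge_k$ and $\mathcal{E}_k$ (partial traces and matrix products) shows it is in fact a planar algebra law.

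The heart of the proof is to show $\tau^{(N)}_{V_\beta}(Q)\to\tau_{V_\beta}(Q)$ as $N\to\infty$ for each fixed $Q$ and all sufficiently small $\beta$. Expanding $\exp\big(-N\sum_v\mu(v)\mathrm{Tr}(\sum_j\beta_j(W_j)_v)\big)$ in powers of $\beta$ and computing the resulting Gaussian (i.e.\ $\cup$-part) expectations of products of traces of words in the $X_e$ by Wick's theorem produces a sum over pairings; drawn on a surface, each pairing is a Feynman diagram whose genus-$0$ part is exactly a planar tangle $D$ of the kind appearing in \eqref{eq:tauVforPlanarAlgebra}, and the $\Gamma$-edge labels propagate along the diagram precisely as in the definition of the operations $Op$ of $\mathcal{P}^\Gamma$, so that matching terms identifies the $N\to\infty$ limit with the right-hand side of \eqref{eq:tauVforPlanarAlgebra}, higher-genus contributions being $O(N^{-2})$. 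The two technical ingredients — both supplied by the Guionnet--Maurel-Segala machinery behind Theorem \ref{thm:tauVasRMLimit}, now run in the $\Gamma$-decorated setting — are (i) uniform-in-$N$ moment bounds and concentration, which follow from the cutoff $R$ together with $V_\beta$ being a small perturbation of the strictly convex Gaussian potential $\cup$, and (ii) absolute convergence of the $\beta$-series uniformly in $N$, which follows from the at-most-exponential (Catalan-type) growth of the number of admissible planar tangles with $m$ input disks; finiteness of $\Gamma$ (guaranteed by finite depth) makes all spaces finite-dimensional and all constants uniform. I expect (i)--(ii) to be the main obstacle: one needs enough $N$-uniform control to commute the limit with the infinite $\beta$-sum and to identify the limit with \emph{the} combinatorial quantity \eqref{eq:tauVforPlanarAlgebra} rather than merely \emph{a} subsequential limit; the Feynman-diagram bookkeeping itself is routine once the graph planar algebra $Op$-dictionary is in place.

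Finally, a pointwise limit of non-negative traces on a fixed $*$-algebra is again a non-negative trace, so $\tau_{V_\beta}$ is a non-negative trace on $(\oplus_k P_k^\Gamma,\wedge_0)$, and restriction to $\mathcal{P}$ completes the proof. As an alternative avoiding random matrices, one could, for small $\beta$, construct a free monotone transport map intertwining the $\tau_{TL}$-variables with variables of law $\tau_{V_\beta}$, yielding a trace-preserving isomorphism $W^*(\tau_{V_\beta})\cong W^*(\tau_{TL})$ and hence non-negativity directly from the already established positivity of $\tau_{TL}$; this route instead requires the planar-algebra version of the Schwinger--Dyson equations.
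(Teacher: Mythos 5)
Your proposal is correct and is essentially the paper's own route: the paper establishes positivity of $\tau_{V_\beta}$ exactly by embedding $\mathcal{P}$ into a finite graph planar algebra $\mathcal{P}^\Gamma$, realizing $\tau_{V_\beta}$ as the $N\to\infty$ limit (Theorem \ref{thm:LawsTauV}, via the Guionnet--Maurel-Segala genus expansion with cutoff $R$ and small $\beta$) of the manifestly non-negative traces $\tau_{V_\beta}^{(N)}$ coming from the matrix model on $\Gamma$, and then restricting back to $(\oplus_k P_k,\wedge_0)$. Your identification of the two technical ingredients (uniform-in-$N$ control to commute the limit with the $\beta$-expansion, and identification of the planar limit with \eqref{eq:tauVforPlanarAlgebra}) matches where the real work lies in \cite{guionnet-jones-shlyakhtenko1,guionnet-maurelSegala:ALEA}.
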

We now show that the laws $\tau_{V_{\beta}}$ arise from random matrix
ensembles, just as in \S\ref{sub:RandomMatrixGraphGaussian} (which
corresponds to $\beta=0$). Once again, we embed $\mathcal{P}$ into
a graph planar algebra $\mathcal{P}^{\Gamma}$ and consider a family
of random matrices $X_{e}$ of size $[N\mu(v)]\times[N\mu(w)]$ labeled
by the edges $e$ of $\Gamma$ (here $[\cdot]$ denotes the integer
part of a number and $\mu$ is the Perron-Frobenius eigenvector of
$\Gamma$). The matrices $X_{e}$ are chosen according to the measure
\[
d\nu_{N}=Z_{N}^{-1}\exp\left(-N\sum_{v}\mu(v)Tr\left((V_{\beta})_{v}\right)\right)\prod_{e}dX_{e}.\]

For any $Q\in P_{k}$, the expression\[
\tau_{N}(Q)=\int\sum_{v}\frac{\mu(v)}{N}Tr(P(Q_{v}(X_{e}:e\in\Gamma)))d\nu_{N}\]
gives rise to a non-commutative law on the non-commutative probability
space $(\mathcal{P}^{\Gamma},\wedge_{0})$ and, by restriction, on
$(\mathcal{P},\wedge_{0})$. We denote this restriction by $\tau_{V_{\beta}}^{(N)}$. 
\begin{thm}
\label{thm:LawsTauV}Assume that $V=\cup+\sum_{j}\beta_{j}W_{j}$
as above. Then there is a $R_{0}>0$ so that for any $R>R_{0}$, there
is a $\beta_{0}>0$ so that for all $|\beta_{j}|<\beta_{0}$, $\tau_{V}^{(N)}\to\tau_{V}$
where $\tau_{V}$ is as in Theorem \ref{thm:tauVforPlanarAlgebra}.
\end{thm}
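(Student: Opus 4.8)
\textbf{Proof plan for Theorem \ref{thm:LawsTauV}.}

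The plan is to proceed by a perturbative analysis around the Gaussian case, closely following the strategy underlying Theorem \ref{thm:tauVasRMLimit} (Guionnet--Maurel-Segala) but carried out inside the graph planar algebra $\mathcal{P}^{\Gamma}$. First I would fix a finite bipartite graph $\Gamma$ with $\mathcal{P}\subset\mathcal{P}^{\Gamma}$ as in the Proposition, and record that the random matrix model $d\nu_N$ is, after identifying each edge matrix $X_e$ with an independent rectangular Gaussian perturbed by the potential $\sum_j\beta_j W_j$, exactly of the type to which the Guionnet--Maurel-Segala machinery applies: the quadratic part $\sum_v\mu(v)\mathrm{Tr}(\cup_v)$ is the covariance of a (rescaled) rectangular GUE indexed by the edges, and the perturbation is a finite sum of trace-monomials in the $X_e$. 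The cutoff $\|A_j\|\le R$ (here $\|X_e\|\le R$) is needed precisely to guarantee that $Z_N<\infty$ and that the measure is well-defined once $V$ is not convex; one chooses $R_0$ large enough that the quadratic part dominates on the region $\{\|X_e\|\le R\}$ and then $\beta_0=\beta_0(R)$ small enough that $V_\beta$ still has a unique ``minimizer'' and the loop/Dyson--Schwinger equations have a unique solution in the relevant ball of the dual of $\bigoplus_k P_k^\Gamma$.

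Next I would expand $\tau_N(Q)=\mathbb{E}_{\nu_N}\big(\sum_v \tfrac{\mu(v)}{N}\mathrm{Tr}(Q_v)\big)$ in powers of the $\beta_j$'s by writing $e^{-N\sum_v\mu(v)\mathrm{Tr}((V_\beta)_v)} = e^{-N\sum_v\mu(v)\mathrm{Tr}(\cup_v)}\prod_j e^{-N\beta_j\sum_v\mu(v)\mathrm{Tr}((W_j)_v)}$ and Taylor-expanding the second factor. Each resulting Gaussian expectation of a product of traces of words in the $X_e$ is evaluated by Wick's theorem; the combinatorics of Wick pairings of edge-paths on $\Gamma$ is exactly the combinatorics of gluing the corresponding labeled disks along non-crossing systems of strings, i.e.\ it produces planar tangles $D$ with the prescribed input disks, with the genus-zero (planar) pairings contributing at order $N^0$ and higher-genus contributions suppressed by $N^{-2}$ (this is the 't~Hooft/Brezin--Itzykson--Parisi--Zuber topological expansion, and is where the graph planar algebra structure is used to match pairings with $Op(D)$). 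Collecting the leading terms and summing over $m_1,\dots,m_n$ with the factors $(-\beta_j)^{m_j}/m_j!$ reproduces precisely the right-hand side of \eqref{eq:tauVforPlanarAlgebra}; one then restricts from $\mathcal{P}^\Gamma$ to $\mathcal{P}$ and identifies the limit with $\tau_{V_\beta}$ of Theorem \ref{thm:tauVforPlanarAlgebra}. That the resulting formal series actually converges for $|\beta_j|<\beta_0$, and that the error terms are uniformly $O(N^{-2})$ rather than merely termwise small, is supplied by the concentration/a~priori-bound estimates of Guionnet--Maurel-Segala applied to the rectangular edge-matrix model on the compact region $\{\|X_e\|\le R\}$.

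The main obstacle is not the Wick-pairing bookkeeping but the analytic control needed to pass from the formal topological expansion to a genuine limit: one must (i) verify the a~priori operator-norm and moment bounds for the ensemble $\{X_e\}$ under $d\nu_N$ uniformly in $N$, which requires $R>R_0$ so that the non-convex potential is controlled, (ii) establish that the empirical-distribution functionals concentrate (so that $\mathbb{E}$ of products of normalized traces factorizes in the limit), and (iii) show that the Dyson--Schwinger (loop) equations satisfied by any limit point have a \emph{unique} solution for small $\beta$, so that the limit is forced to be $\tau_{V_\beta}$; this uniqueness is what pins down $\beta_0$ as a function of $R$. A secondary technical point is checking that the integer-part truncations $[N\mu(v)]$ do not affect the limit, which follows since $[N\mu(v)]/N\to\mu(v)$ and the normalized traces are Lipschitz in the relevant parameters. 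Once these analytic inputs are in place — and they are exactly the ones made available by \cite{guionnet:icm,guionnet-maurelSegala:ALEA} and the finite-depth hypothesis, which keeps $\Gamma$ and hence the number of edge-matrices finite — the convergence $\tau_V^{(N)}\to\tau_V$ follows, and non-negativity of the limiting trace is inherited from the fact that each $\tau_V^{(N)}$ is manifestly a state on the $*$-algebra generated by the $X_e$.
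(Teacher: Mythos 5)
The paper itself gives no proof of this theorem, deferring to \cite{guionnet-jones-shlyakhtenko1,guionnet-jones-shlyakhtenko2}, and your plan is exactly the strategy carried out there: embed $\mathcal{P}$ in a finite graph planar algebra (using finite depth), view $d\nu_N$ as a perturbed rectangular Gaussian ensemble indexed by the edges of $\Gamma$, and run the Guionnet--Maurel-Segala machinery (a priori bounds under the cutoff $R$, concentration, uniqueness of the solution of the Schwinger--Dyson equations for $|\beta_j|<\beta_0(R)$) to upgrade the formal Wick/topological expansion to genuine convergence to \eqref{eq:tauVforPlanarAlgebra}. The one point you leave implicit is the combinatorial identity that makes the graph model actually compute $Op(D)$: a closed loop in a tangle must contribute a factor of $\delta$, and in the edge-matrix model this comes from the Perron--Frobenius relation $\sum_{w\sim v}\mu(w)=\delta\mu(v)$ interacting with the weights $\sqrt{\mu(v)/\mu(w)}$ and the weighted trace $\sum_v\mu(v)\mathrm{Tr}(\cdot)$ --- this is precisely why those normalizations appear and is the only place where the argument is more than a transcription of the square-matrix case.
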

The finite-depth assumption seems to be technical in nature and is
probably not necessary; it is automatically satisfied if $\delta<2$.

\subsection{Example: $O(n)$ models}

One application of our construction sheds some light on the construction
of so-called $O(n)$ models used by in physics by Zinn-Justin and
Zuber in conjunctions with questions of knot combinatorics \cite{zinn-justin:quantumOnModel,zuber-zinnJustin:countingTanglesLinks}.
For $n$ an integer, the $O(n)$ model is the random matrix ensemble
corresponding to the measure\[
Z_{N}^{-1}\exp(-NTr(V(X_{1},\dots,X_{n})))dX_{1}\cdots dX_{n}dX_{1}^{*}\cdots dX_{n}^{*}\]
where $V$ is a fourth-degree polynomial in $X_{1},\dots,X_{n},X_{1}^{*},\dots,X_{n}^{*}$
, which is invariant under the $U(n)$ action given by \eqref{eq:UkAction}.
In degree $\leq4$, up to cyclic symmetry, the only such invariant
polynomials actually lie in the copy of $TL$ contained in the algebra
$\mathcal{P}^{U(n)}$ in the notation of section \S\ref{sub:Planar-algebras-of-polynomials}:
they are linear combinations of the constant polynomial and the polynomials
$\cup=\sum X_{i}X_{i}^{*}$, $\cup\cup=\sum X_{i}X_{i}^{*}X_{j}X_{j}^{*}$
and $\Cup=\sum X_{i}X_{j}^{*}X_{j}X_{i}^{*}$ (these diagrams are
in $TL\subset\mathcal{P}^{U(n)}$ with parameter $\delta=n$). 

Hence the $O(n)$ model is the random matrix ensemble associated to
the measure\begin{align*}
\mu_{(\beta,n)}^{(N)} & =Z_{N}^{-1}\exp(-NTr(\sum X_{i}X_{i}^{*}+\beta_{i}\sum X_{i}X_{i}^{*}X_{j}X_{j}^{*}+\beta_{2}\sum X_{i}X_{j}^{*}X_{j}X_{i}^{*}).\end{align*}

Thus we are led to consider the laws $\tau_{\beta}$ associated to
the element\[
V_{(\beta,\delta)}=\cup+\beta_{1}\cup^{2}+\beta_{2}\Cup\in TL\]
$\beta=(\beta_{1},\beta_{2})$ for each of the possible parameters
$\delta\in\{2\cos\frac{\pi}{n}:n\geq3\}\cup[2,+\infty)$. From our
discussion we conclude that the limit law associated to the $O(n)$
model is exactly $\tau_{V_{(\beta,\delta=n)}}$.

But since our setting permits non-integer $\delta$, we thus gain
the flexibility of considering the laws $\tau_{V(\beta,\delta)}$
for other values of $\delta$. It can be shown that the values of
$\tau_{V_{(\beta,\delta=n)}}$ on a fixed element of $TL$ are analytic
in $\delta$. Thus the extension we get is exactly the analytic extension
from $n\in\mathbb{Z}$ to $\mathbb{C}$ considered by physicists in
their analysis.

The combinatorics of the resulting law $\tau_{V}$ is governed by
equation \eqref{eq:tauVforPlanarAlgebra}, which is written entirely
in planar algebra terms. In particular, this shows that the $O(n)$
makes rigorous sense for any $\delta\in\{2\cos\frac{\pi}{n}:n\geq3\}\cup[2,+\infty)$
(in the physics literature, the $O(n)$ model was used for non-integer
$n$; the definition involved extending various equations analytically
from $n\in\mathbb{Z}$ to $\mathbb{C}$).

It should be mentioned that $O(n)$ models were introduced in the
physics literature to handle questions of knot enumerations; planar
algebra interpretations of these computations are the subject of on-going
research.

\subsection{Properties of the limit laws $\tau_{V}$}

Because of Theorem \ref{thm:LawsTauV}, fixing a finite-depth planar
algebra $\mathcal{P}$ and a family of elements $V_{\beta}=\cup+\beta W\in\mathcal{P}$,
we obtain a family laws $\tau_{\beta}=\tau_{V_{\beta}}$. These in
turn give rise to a family of von Neumann algebras $W^{*}(\tau_{\beta})$
generated in the GNS representation associated to $\tau_{\beta}$.
When $\beta=0$ these are free group factors (see Theorem \ref{thm:IsomClassTau0}).
Voiculescu conjectured that this is also the case for $\beta\neq0$
sufficiently small.

Using ideas from free probability theory, there has been significant
progress on identifying properties of the associated Neumann algebras
and $C^{*}$-algebras. The key is the following approximation result,
whose proof relies on the theory of free stochastic differential equations
\cite{biane-speicher:stochcalc}. 
\begin{prop}
\cite{guionnet-shlyakhtenko-convex} \label{pro:canApproximate}Assume
that $\mathcal{P}$ is a the planar algebra of polynomials in $K$
variables. Let $S_{1},S_{2},\dots$ be an infinite free semicircular
family generating the $C^{*}$ algebra $B$ with semicircular law
$\tau$, and let $A_{\beta}=C^{*}(\tau_{\beta})$ in the GNS representation
associated to $\tau_{\beta}$. Let $X_{1},\dots,X_{r}\in A_{\beta}$.
Then there is a $\beta_{0}>0$ so that for all $|\beta|<\beta_{0}$
and any $\epsilon>0$ there exists an embedding $\alpha:A_{\beta}\to(A_{\beta},\tau_{\beta})*(B,\tau)$
and elements $Y_{1},\dots,Y_{r}\in B$ so that $\Vert\alpha(X_{j})-Y_{j}\Vert<\epsilon$.
\end{prop}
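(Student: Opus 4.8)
The plan is to realize the $C^*$-algebra $A_\beta = C^*(\tau_\beta)$ as the algebra generated by the solution, at some fixed time $T$, of a free stochastic differential equation driven by a free Brownian motion, and then to approximate that solution by a polynomial in the increments of the Brownian motion — which, being a polynomial in free semicircular elements, lies in a copy of $B$. First I would recall from Theorem \ref{thm:tauVasRMLimit} and the combinatorial formula \eqref{eq:PlanarMaps} that for $\beta$ small the law $\tau_\beta = \tau_{V_\beta}$ with $V_\beta = \cup + \beta W = \tfrac12\sum t_j t_j^* + \beta W$ is the unique free Gibbs state for the potential $V_\beta$; equivalently it is characterized by the Schwinger--Dyson (conjugate variable) equations $\tau_\beta \otimes \tau_\beta\big((\partial_j Q)\big) = \tau_\beta\big((\mathscr D_j V_\beta)\, Q\big)$ for all noncommutative polynomials $Q$, where $\partial_j$ is the free difference quotient and $\mathscr D_j$ the cyclic derivative. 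This is the input from \cite{biane-speicher:stochcalc} and the free SDE machinery.

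The key step is then to construct, on the free product $(A_\beta,\tau_\beta) * (B,\tau)$, a stationary free diffusion: let $(S_1(t),\dots,S_K(t))$ be a free Brownian motion built from the semicircular family $S_1,S_2,\dots$ (reindexed so that disjoint time intervals give free increments), and consider the free SDE
\begin{equation}
dX_j(t) = dS_j(t) - \tfrac12\, \mathscr D_j V_\beta\big(X_1(t),\dots,X_K(t)\big)\, dt,\qquad X_j(0) = X_j \in A_\beta,
\end{equation}
which for $\beta$ below a threshold $\beta_0$ has a unique solution by a fixed-point/Picard argument (the drift is locally Lipschitz on operator-norm balls, uniformly for small $\beta$; this is exactly where the smallness of $\beta$ enters). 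The Schwinger--Dyson characterization of $\tau_\beta$ guarantees that $\tau_\beta$ is the stationary distribution, so the law of $(X_1(t),\dots,X_K(t))$ is $\tau_\beta$ for every $t\ge 0$; in particular the map $X_j \mapsto X_j(t)$ extends to a trace-preserving embedding $\alpha_t : A_\beta \hookrightarrow (A_\beta,\tau_\beta)*(B,\tau)$. Now I approximate: discretizing the SDE by an Euler--Maruyama scheme on $[0,T]$ with $n$ steps expresses $X_j(T)$ as a norm-limit (as $n\to\infty$) of noncommutative polynomials in the increments $S_i(k T/n) - S_i((k-1)T/n)$ and in the initial data $X_1,\dots,X_K$; iterating the recursion and using that at time $0$ one can, after a further small shift, absorb the $X_j$-dependence — or, more simply, first run the diffusion forward from a time $-T<0$ so that $X_j(0)$ itself is already a limit of polynomials in Brownian increments living in $B$ — shows that $\alpha(X_j) := \alpha_T(X_j)$ is approximated in norm by elements $Y_j\in B$. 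Taking $n$ large enough gives $\|\alpha(X_j) - Y_j\| < \epsilon$ for the finitely many generators, and since the $X_1,\dots,X_r$ in the statement are (norm-limits of) polynomials in these generators, a standard functional-calculus/polynomial-approximation argument upgrades this to the stated $X_1,\dots,X_r$.

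The main obstacle is the quantitative control of the Euler scheme in \emph{operator norm} rather than in $L^2(\tau)$: free SDE solution theory is most naturally phrased in the tracial $L^2$ or $*$-moment topology, whereas the conclusion $\|\alpha(X_j) - Y_j\| < \epsilon$ demands uniform norm estimates on all the intermediate iterates. Handling this requires an a priori bound keeping every $X_j(t)$ inside a fixed operator-norm ball for $t\in[0,T]$ — which is available because the semicircular increments have bounded support and the drift $\mathscr D_j V_\beta$ maps such a ball into a controlled ball when $\beta$ is small — together with a norm-Lipschitz estimate for the drift on that ball, so that the discretization error telescopes with a Gr\"onwall-type constant depending only on $T$ and $\beta_0$. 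Once this uniform-norm stability of the free Euler scheme is in place (this is the content we borrow from \cite{biane-speicher:stochcalc} and the free-SDE literature), the rest is bookkeeping: choose $T$ arbitrary (say $T=1$), choose the step count $n$ large relative to $\epsilon$, read off the $Y_j$, and verify trace-preservation of $\alpha$ from stationarity of $\tau_\beta$.
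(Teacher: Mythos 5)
You have correctly identified the free--SDE framework of \cite{biane-speicher:stochcalc} and two sound ingredients: stationarity of $\tau_\beta$ under the diffusion $dX_t=dS_t-\frac12 DV_\beta(X_t)\,dt$ (equivalently, the Schwinger--Dyson equation for $\tau_\beta$), which yields the trace-preserving embedding $\alpha_t$, and the need for operator-norm rather than $L^2$ control. But the central step of your argument fails. The Euler--Maruyama approximants of $X_j(T)$ are polynomials in the Brownian increments \emph{and in the initial data} $X_1,\dots,X_K\in A_\beta$; since the initial data do not lie in $B$, these approximants are not elements of $B$, no matter how fine the mesh. Your proposed fixes do not repair this: ``absorbing the $X_j$-dependence by a small shift'' is not a defined operation, and ``running the diffusion forward from time $-T$'' is circular, because the solution at time $0$ then depends on an initial condition at time $-T$ which again has law $\tau_\beta$ and lives in $A_\beta$, not in $B$. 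Over any \emph{finite} time horizon the solution genuinely depends on its initial condition, so no discretization can produce approximants lying in $B$.

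The missing idea --- and the reason the cited reference \cite{guionnet-shlyakhtenko-convex} is about \emph{strictly convex} interactions --- is a contraction estimate for a coupling of two solutions. One runs the same SDE twice, driven by the same free Brownian motion: once from $X_0=X\in A_\beta$, giving the stationary solution $\alpha_t(X)$ whose law is $\tau_\beta$ for all $t$; and once from an initial condition in $B$, say $Y_0=0$, so that $Y_t\in B$ automatically (Picard iteration stays inside the $C^*$-algebra generated by the semicircular family --- no Euler scheme is needed for this). Strict convexity of $V_\beta$, which holds for $|\beta|<\beta_0$ because $\cup$ is uniformly convex and $\beta W$ is a small perturbation, yields an exponential contraction $\Vert X_t-Y_t\Vert\leq e^{-ct}\Vert X_0-Y_0\Vert$ in operator norm. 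Letting $t\to\infty$ (rather than refining a mesh) gives $\Vert\alpha_t(X_j)-Y_{j,t}\Vert<\epsilon$ with $Y_{j,t}\in B$; the reduction from arbitrary $X_1,\dots,X_r$ to the generators by polynomial approximation is then as you describe. Note that this is where the smallness of $\beta$ is really used: not merely for local Lipschitzness and existence of solutions, but to guarantee the uniform convexity that makes the diffusion forget its initial condition.
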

Using this Proposition, many of the properties of the algebras $A_{\beta}$
can be deduced from those of the algebra $B$. 
\begin{thm}
\cite{guionnet-shlyakhtenko-convex} \label{thm:AlgebrasOfTauV}Let
$V=\cup+\beta W$ be an element of a finite-depth planar algebra $\mathcal{P}$.
Let $\tau_{\beta}$ be the associated law on $(\mathcal{P},\wedge_{0})$.
The von Neumann algebra $M=W^{*}(\tau_{\beta})$ and the $C^{*}$-algebra
$A=C^{*}(\tau_{\beta})$ satisfy:
\begin{enumerate}
\item $M$ is a non-$\Gamma$ II$_{1}$ factor and has the Haagerup property;
\item $A$ is exact;
\item $M$ has Ozawa's property AO and is therefore solid \cite{asher:AO}.
\end{enumerate}
\end{thm}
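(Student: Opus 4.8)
The plan is to derive Theorem \ref{thm:AlgebrasOfTauV} from Proposition \ref{pro:canApproximate} by a transfer-of-properties argument, exactly in the spirit of how Haagerup property, exactness, and property AO are known to behave under "small" deformations of free semicircular systems. First I would reduce to the polynomial planar algebra case: since $\mathcal{P}$ is finite-depth, it embeds into a finite graph planar algebra $\mathcal{P}^\Gamma$, and by \S\ref{sub:RandomMatrixGraphGaussian}--\ref{sub:Free-analogs-ofGibbs} the law $\tau_\beta$ on $(\mathcal{P},\wedge_0)$ is the restriction of a free Gibbs law on the matrix-amplification $\bigoplus M_{[N\mu(v)]\times[N\mu(w)]}$ picture, which is a corner of a polynomial-type situation. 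Concretely, $W^*(\tau_\beta)$ sits as a corner $pW^*(\tilde\tau_\beta)p$ of the von Neumann algebra generated by a free Gibbs law associated to the $\cup$-type potential on finitely many rectangular "generators" $X_e$; all three properties (non-$\Gamma$-ness, Haagerup, exactness, AO/solidity) pass to and from corners, so it suffices to prove the theorem for $M=W^*(\tau_\beta)$, $A=C^*(\tau_\beta)$ with $\mathcal{P}$ the polynomial planar algebra, which is the hypothesis of Proposition \ref{pro:canApproximate}.

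Next, with Proposition \ref{pro:canApproximate} in hand, the exactness of $A$ is essentially immediate: the proposition produces, for small $\beta$, embeddings $\alpha:A_\beta\hookrightarrow A_\beta * B$ with $\alpha(A_\beta)$ arbitrarily close (in norm, on any finite tuple of generators) to elements of the free semicircular $C^*$-algebra $B$, which is exact (it is a free product of exact, indeed nuclear, $C^*$-algebras). Since the self-adjoint generators of $A_\beta$ can be norm-approximated by self-adjoints in $B$, one gets $A_\beta\hookrightarrow B$ up to arbitrarily small perturbation of generators; combined with the fact that exactness is a local, subalgebra-inherited, and perturbation-stable property (a $C^*$-algebra generated by operators within $\epsilon$ of an exact sub-$C^*$-algebra, for all $\epsilon$, is exact — one uses that exactness is detected on generators and passes to nuclear embeddability of $A_\beta$ into $B$), exactness of $A$ follows. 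The Haagerup property for $M=W^*(\tau_\beta)$ follows the same way: $B$ has the Haagerup property (free products of amenable tracial von Neumann algebras have it, by Jolissaint–Boca), and the approximate-embedding statement lets one pull back the c$_0$ family of unital completely positive trace-preserving maps witnessing Haagerup from $B * M_\beta$ to $M_\beta$; here one must check that the perturbation parameter $\epsilon$ can be driven to $0$ while keeping the maps unital, trace-preserving and completely positive, which is the standard way the Haagerup property is transferred under this kind of free-stochastic-calculus deformation.

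For part (3), property AO and solidity, the argument is again reductive: Ozawa's property AO for $M$ says there is a weakly dense exact $C^*$-subalgebra $A\subset M$ such that the $*$-homomorphism $A\otimes_{\mathrm{alg}} A^{op}\to B(L^2(M))/K(L^2(M))$, $a\otimes b^{op}\mapsto \lambda(a)\rho(b)+K$, is continuous for the minimal tensor norm. One takes $A=C^*(\tau_\beta)$, which is exact by part (2); the key point is that $B=C^*(S_1,S_2,\dots)$ has property AO (this is classical for free semicircular systems, going back to Akemann–Ostrand / Haagerup, as used in \cite{asher:AO}), and property AO is stable under the operations in play here — passing to $C^*$-subalgebras, to corners, and, crucially, under the approximate embedding $A_\beta\hookrightarrow A_\beta * B$ with generators near $B$. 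Once $M$ has property AO and is a II$_1$ factor it is solid by Ozawa's theorem \cite{asher:AO}, and non-$\Gamma$-ness plus factoriality of $M$ can be read off either from solidity (a solid II$_1$ factor is either amenable or non-$\Gamma$, and $M$ is not amenable for $\beta$ small since $M_0(\mathcal{P})$ is a free group factor and non-$\Gamma$-ness is stable under the deformation) or directly from the approximate embedding into the free group factor $B$. The main obstacle, and the place where all the real work sits, is making the transfer of each property rigorous across the approximate embedding of Proposition \ref{pro:canApproximate}: one must show that the defining data (c$_0$-cp-maps for Haagerup; nuclear embeddability for exactness; the Calkin-algebra $*$-homomorphism for AO) can be transported from $B*M_\beta$ back to $M_\beta$ and that the errors, controlled by $\epsilon$, vanish in the limit — in particular that the limiting maps remain unital and trace-preserving. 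The finite-depth hypothesis enters only through the reduction to the polynomial case (to make $\Gamma$ finite so that $M_\beta$ is a genuine corner of a finitely-generated polynomial free Gibbs von Neumann algebra), and is presumably removable, as the remark following Theorem \ref{thm:LawsTauV} suggests.
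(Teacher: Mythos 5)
Your overall strategy is the one the paper itself points to: Theorem \ref{thm:AlgebrasOfTauV} is quoted from \cite{guionnet-shlyakhtenko-convex} without proof, and the only indication of method given here is that the properties of $A_\beta$ and $M_\beta$ are ``deduced from those of the algebra $B$'' via Proposition \ref{pro:canApproximate}. Your reduction of the general finite-depth case to the polynomial/graph case through corners is a sensible (and necessary) supplement, since Proposition \ref{pro:canApproximate} is stated only for the polynomial planar algebra, and your exactness argument is essentially right: nuclear embeddability is a local property detected on finite sets of elements and is stable under norm perturbation of generators, so approximating the generators of $\alpha(A_\beta)$ by elements of the exact algebra $B$ does give exactness. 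The AO part is likewise the intended route (dense exact subalgebra $C^*(\tau_\beta)$, compactness of mixed left--right commutators via the approximation and the known AO property of free semicircular systems, then Ozawa's theorem for solidity).

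The genuine gap is in the Haagerup step. You propose to pull back a $c_0$ family of u.c.p.\ trace-preserving maps ``witnessing Haagerup'' from $B*M_\beta$ to $M_\beta$; but $L(\mathbb{F}_\infty)*M_\beta$ is only known to have the Haagerup property if $M_\beta$ already does (Boca's free-product stability needs \emph{both} free factors to have it), so there is no such family to restrict --- the argument is circular. Nor can you substitute the perturbation trick that works for exactness: the Haagerup property is a $W^*$/$L^2$ property, and norm-closeness of generators gives no control whatsoever on the von Neumann algebras they generate, so Haagerup does not transfer from $W^*(Y_1,\dots,Y_r)\subset L(\mathbb{F}_\infty)$ to $\alpha(M_\beta)$ by proximity of generators. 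The actual argument in \cite{guionnet-shlyakhtenko-convex} constructs the compact approximations of the identity directly: one composes the embedding $\alpha_\epsilon$ with a u.c.p.\ trace-preserving deformation of the semicircular variables (the free Ornstein--Uhlenbeck semigroup, equivalently the free-SDE flow itself) and with the conditional expectation back onto $M_\beta$; compactness on $L^2$ comes from the decay of that semigroup on the finite-dimensional Wigner chaoses together with the norm control of the error, and the maps tend to the identity as $\epsilon\to0$. You gesture at this (``free-stochastic-calculus deformation'') but the step as written would fail. A smaller omission: factoriality and non-amenability of $M_\beta$, which you need before ``solid $\Rightarrow$ non-$\Gamma$'' applies, are asserted rather than derived.
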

In the case that $V$ is a polynomial potential (i.e., we are in the
setting of Theorem \ref{thm:tauVasRMLimit}), one can use the results
of \cite{noproj} to prove that $K_{0}(A)=0$ and that $A_{\beta}$
is projectionless. Indeed, if $p\in A$ were a non-trivial idempotent,
then because of Proposition \ref{pro:canApproximate}, $C^{*}(S_{1},S_{2},\dots)\subset C_{\textrm{red}}^{*}(\mathbb{F}_{2})$
would be forced to contain a non-trivial idempotent as well. This
statement has random matrix consequences:
\begin{cor}
\cite{guionnet-shlyakhtenko-convex} Let $\mathcal{P}$ be the planar
algebra of polynomials in $K$ variables, $V=V_{\beta}=\cup+\beta W\in\mathcal{P}$,
and let $\tau_{\beta}=\tau_{V_{\beta}}$ be as in Theorem \ref{thm:tauVasRMLimit}.
Let $Q=Q^{*}\in\mathcal{P}$ be arbitrary polynomial. Let $Q^{(N)}=Q(X_{1},\dots,X_{K})$
be the random matrix obtained by evaluating $Q$ in the random matrices
$(X_{1},\dots,X_{K})$ chosen according to the measure \eqref{eq:measuresMuV}.
Let $\mu^{(N)}$ be the expected value of the spectral measure of
$Q$. Then $\mu^{(N)}\to\mu$ where $\mu$ is a measure with connected
support.\end{cor}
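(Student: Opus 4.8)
The plan is to reduce the statement about the random-matrix spectral measure $\mu^{(N)}$ of $Q^{(N)}$ to the structural properties of the limiting $C^*$-algebra $A=C^*(\tau_\beta)$ established in Theorem \ref{thm:AlgebrasOfTauV} together with the approximation result Proposition \ref{pro:canApproximate}. First I would invoke Theorem \ref{thm:tauVasRMLimit}: for sufficiently small $\beta$ the laws $\tau_V^{(N)}$ converge to $\tau_V$, so that $\tau^{(N)}(Q^p)=\int x^p\, d\mu^{(N)}(x)\to \tau_V(Q^p)$ for every $p$; since all the measures involved are supported in a fixed compact set (the cutoff $R$ controls the operator norms uniformly), this moment convergence upgrades to weak convergence $\mu^{(N)}\to\mu$, where $\mu$ is the spectral distribution of the self-adjoint element $Q\in (M,\tau_\beta)$, i.e.\ $\mu$ is the law of $Q$ viewed as an operator in the II$_1$ factor $M=W^*(\tau_\beta)$. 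So the whole content is that $\mu$ has connected support.

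The key step is then to show that $Q=Q^*\in A=C^*(\tau_\beta)$ has spectrum an interval, equivalently that $A$ contains no nontrivial spectral projections of $Q$; in fact it suffices that $A$ be projectionless (has no nontrivial idempotents), since a gap in the spectrum of $Q$ would produce, via continuous functional calculus, a nontrivial projection $1_{(a,b)}(Q)\in C^*(Q)\subseteq A$. This is exactly the route sketched in the paragraph preceding the corollary: by Proposition \ref{pro:canApproximate}, applied with $\mathcal{P}$ the planar algebra of polynomials and $\beta$ small, any finite tuple from $A_\beta$ can be approximated in norm, after an embedding $\alpha$, by elements of $B=C^*(S_1,S_2,\dots)$ inside the free product $(A_\beta,\tau_\beta)*(B,\tau)$. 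If $p\in A$ were a nontrivial idempotent, then $\alpha(p)$ is a nontrivial idempotent in the free product which is norm-approximated by elements $Y\in B$; using that $B$ sits inside $C^*_{\mathrm{red}}(\mathbb{F}_\infty)$ (hence inside $C^*_{\mathrm{red}}(\mathbb{F}_2)$ up to the standard identifications), one would obtain a nontrivial idempotent, contradicting the result of \cite{noproj} that $C^*_{\mathrm{red}}(\mathbb{F}_2)$ is projectionless. Hence $A$, and a fortiori $C^*(Q)$, is projectionless, so $\operatorname{spec}(Q)$ has no isolated pieces and $\operatorname{supp}\mu=\operatorname{spec}(Q)$ is connected.

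I expect the main obstacle to be the idempotent-transfer argument: one must be careful that a norm-limit of elements of $B$ which equals a genuine idempotent $\alpha(p)$ actually forces the presence of a nontrivial idempotent (or at least a nontrivial projection) in $B$ itself, not merely in its norm closure in the larger algebra. The standard device is that if $\|z-Y\|$ is small and $z=z^2$ with $0\neq z\neq 1$, then $Y$ has a spectral gap near $1/2$, so $\chi(Y)$ with $\chi$ a continuous function separating the two parts of $\operatorname{spec}(Y)$ lies in $C^*(Y)\subseteq B$ and is a nontrivial projection (nontriviality coming from the fact that $\tau(z)$ and $1-\tau(z)$ are both bounded away from $0$ because $p$ is nontrivial and the embedding $\alpha$ is trace-preserving). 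Organizing this perturbation argument, and checking the compatibility of the free-product traces so that nontriviality of $p$ is preserved, is the only genuinely delicate point; everything else is an application of the cited theorems. Once projectionlessness of $A$ is in hand, connectedness of $\operatorname{supp}\mu$ is immediate from functional calculus, and weak convergence $\mu^{(N)}\to\mu$ from Theorem \ref{thm:tauVasRMLimit}, completing the proof.
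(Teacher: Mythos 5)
Your proposal is correct and follows essentially the same route as the paper: identify $\mu$ as the spectral distribution of $Q$ in $C^*(\tau_\beta)$, note that a disconnected support would yield a nontrivial projection by functional calculus, and rule this out by the projectionlessness of $C^*(\tau_\beta)$ obtained from Proposition \ref{pro:canApproximate} together with the Pimsner--Voiculescu result \cite{noproj}. The paper's proof is just a terser version of this; your additional care with the moment-to-weak convergence and the idempotent-perturbation step fills in details the paper leaves implicit.
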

\begin{proof}
Let $Q^{(\infty)}$ denote the element of $C^{*}(\tau_{\beta})$ that
corresponds to the polynomial $Q$ in the GNS construction associated
to $\tau_{\beta}$. Then the law of $Q$ is exactly $\mu$. If the
support of $\mu$ is not connected, the spectrum of $Q\in C^{*}(\tau_{\beta})$
is disconnected. But that means that $C^{*}(\tau_{\beta})$ contains
a non-trivial projection, contradicting Theorem \ref{thm:AlgebrasOfTauV}.
\end{proof}
It turns out that in the presence of symmetry (for non-integer $\delta$)
the algebra $A_{\beta}$ may contain non-trivial projections (even
at $\beta=0$). This phenomenon is not well-understood at this point,
however. It would be interesting to compute the $K$-theory of the
algebras $A_{\beta}$ for general planar algebras $\mathcal{P}$.

\bibliographystyle{amsalpha}

\providecommand{\bysame}{\leavevmode\hbox to3em{\hrulefill}\thinspace}
\providecommand{\MR}{\relax\ifhmode\unskip\space\fi MR }
\providecommand{\MRhref}[2]{%
  \href{http://www.ams.org/mathscinet-getitem?mr=#1}{#2}
}
\providecommand{\href}[2]{#2}

\end{document}